\theoremstyle{plain}
\newtheorem{thm}{Theorem}[section]
\newtheorem{prop}[thm]{Proposition}
\theoremstyle{definition}
\newtheorem{defn}[thm]{Definition}
\theoremstyle{remark}
\newtheorem{rem}[thm]{Remark}
\newtheorem{example}[thm]{Example}
\numberwithin{equation}{section}
\numberwithin{equation}{section}
\newcommand{\R}{\mathbb{R}}
\newcommand{\PP}{\mathbb{P}}
\newcommand{\E}{\mathbb{E}}
\newcommand{\x}{\mathbf{x}}
\newcommand{\p}{\mathbf{p}}
\newcommand{\bel}[1]{\begin{equation}\label{#1}}
\newcommand{\be}{\begin{equation}}
\newcommand{\ba}{\begin{eqnarray}}
\newcommand{\ea}{\end{eqnarray}}
\newcommand{\qe}{\end{equation}}
\newcommand{\al}{\alpha}
\newcommand{\alv}{\boldsymbol{\alpha}}
\newcommand{\ld}{\lambda}
\newcommand{\de}{\delta}
\newcommand{\suml}{\sum\limits}
\newcommand{\intl}{\int\limits}
\newcommand{\prodl}{\prod\limits}
\newcommand{\ble}{\left\{}
\newcommand{\bri}{\right\}}
\newcommand{\ple}{\left (}
\newcommand{\pri}{\right )}
\newcommand{\ov}{\boldsymbol{0}}
\begin{document}
\title[Wright-Fisher model]{A general solution
  of the Wright-Fisher model of random genetic drift}
\author{Tat Dat Tran, Julian Hofrichter, J\"{u}rgen Jost}
\address{Tat Dat Tran, Max Planck Institute for Mathematics in the Sciences, D-04103 Leipzig, Germany}
\email{trandat@mis.mpg.de}
\address{Julian Hofrichter, Max Planck Institute for Mathematics in the Sciences, D-04103 Leipzig, Germany}
\email{julian.hofrichter@mis.mpg.de}
\address{J\"{u}rgen Jost, Max Planck Institute for Mathematics in the
  Sciences, D-04103 Leipzig, Germany\\
Department of Mathematics, Leipzig University, D-04081 Leipzig,
Germany\\
Santa Fe Institute for the Sciences of Complexity, Santa Fe, NM 87501,
USA}
\email{jost@mis.mpg.de}
\thanks{
The research leading
to these results has received funding from the European Research
Council under the European Union's Seventh Framework Programme
(FP7/2007-2013) / ERC grant agreement no. 267087. The first and the  second
author have also been supported by the IMPRS ``Mathematics in the Sciences'', and the material presented in this
paper is largely based on the first author's thesis.}
\date{\today}
\keywords{Random genetic drift, Fokker-Planck equation, Wright-Fisher
  model,  several alleles.}
\maketitle
\begin{abstract}
We develop a general solution for the Fokker-Planck (Kolomogorov) equation
representing the diffusion limit of the Wright-Fisher model of random
genetic drift for an arbitrary number of alleles at a single
locus. From this solution, we can readily deduce  information about
the evolution of a Wright-Fisher population.
\end{abstract}
\section{Introduction}
The random genetic drift model developed implicitly by Fisher in
\cite{fisher} and explicitly by Wright in \cite{wright1}, and
henceforth called the  Wright-Fisher model is one of the most popular
stochastic models in population genetics (\cite{ewens,buerger}). In
its simplest form, it is concerned with the evolution of the
probabilities between non-overlapping generations in a population of
fixed size of two alleles at a single diploid locus that are obtained from
random sampling in the parental generation, without additional
biological mechanisms like mutation, selection, or a spatial
population structure. Generalizations to multiple alleles, several
loci, inclusion of mutations and selection etc. then constituted an
important part of mathematical population genetics. It is our aim to
develop a general mathematical perspective on the Wright-Fisher model
and its generalizations. In the present paper, we treat the case of
multiple alleles at a single site. In a companion paper \cite{thj1}, we discuss
the simplest case of 2 alleles in more detail. Generalizations will be addressed in
subsequent papers.

Let us first describe the basic mathematical contributions of Wright
and Kimura.  In
1945, Wright  approximated the discrete process by a  diffusion
processthat is continuous in space and time  (continuous process, for short)
and that can be described by a Fokker-Planck equation. In 1955, by solving
this Fokker-Planck equation derived from the Wright-Fisher model,
Kimura obtained an exact solution for the Wright-Fisher model in the
case of $2$ alleles (see \cite{kimura1}). Kimura (\cite{kimura2}) also
developed an approximation for the solution of the Wright-Fisher model
in the multi-allele case, and in 1956, he obtained (\cite{kimura3}) a
exact solution of this model for $3$ alleles and concluded that this
can be generalized to arbitrarily many alleles. This yields more
information about the Wright-Fisher model as well as the corresponding
continuous process. Kimura's solution, however, is not entirely
satisfactory. For one thing, it depends on very clever algebraic
manipulations so that the general mathematical structure is not very
transparent, and this makes generalizations very difficult. Also,
Kimura's approach is local in the sense that it does not naturally
incorporate the transitions resulting from the (irreversible) loss of
one or more alleles in the population. Therefore, for instance the
integral of his probability density function on its defined domain is
not equal to $1$.

In this paper, we derive the formalism for a general solution that
naturally includes the transitions resulting from the disappearance of
alleles. The key are evolution equations for the moments of the
probability density. We show that there exists a unique global
solution of the Fokker-Planck equation. We then utilize this solution
to derive properties of the underlying process, like the expected
transition times.

\section{The global solution of the Wright-Fisher model}
In this section, we will establish some notations, and then prove some
propositions as well as the main theorem of this paper.
\subsection{Notations} $\Delta_n:=\{(x^1,x^2,\dots ,x^{n+1}):
\sum_{i=1}^{n+1} x^i=1\}$ is the standard $n-$simplex in
$\R^{n+1}$ representing the probabilities or relative frequencies of alleles $A_1,\dots
, A_{n+1}$ in our population. Often, however, it is advantageous to work in $\R^n$
instead of $\R^{n+1}$,
and with $e_0:= (0,\ldots,0) \in \R^n,\:
e_k:=(0,\ldots,\underbrace{1}_{k^{th}},\ldots,0)\in \R^n$, we
therefore define
\begin{equation*}
\Omega_n :=\: \mathrm{intco} \ble e_0,\ldots,e_n \bri := \ble \suml_{k=0}^n x^k e_k,\: (x,x^0)=\ple x^1,\ldots,x^n,1-\suml_{k=1}^n x^k\pri \in \mathrm{int} \Delta_n \bri  .
\end{equation*}
Moreover, we shall need the subsimplices corresponding to subsets of
alleles, using the following notations
\begin{equation*}
\begin{split}
I_k:=&\ble \ble i_0,\ldots,i_k \bri,\: 0\le i_0<\ldots <i_k \le n\bri, \quad k\in \{1,\ldots,n\},\\
V_0 :=&\: \ble e_0,\ldots, e_n \bri,\\
& \text{ the domain representing a population of one allele},\\
V_k^{(i_0,\ldots,i_k)}:=&\: \mathrm{intco} \ble e_{i_0},\ldots,e_{i_k}\bri, \quad k\in \{1,\ldots,n\},\\
& \text{ the domain representing a population of alleles $\{A_{i_0},\ldots,A_{i_k}\}$,}\\
V_k :=&\:\ble \mathrm{intco} \ble e_{i_0},\ldots,e_{i_k}\bri \text{ for some } i_0<\ldots<i_k \in \overline{0,n} \bri, \quad k\in \{1,\ldots,n\},\\
=& \bigsqcup\limits_{(i_0,\ldots,i_k)\in I_k} V_k^{(i_0,\ldots,i_k)},\\
& \text{ the domain representing a population of $(k+1)$ alleles,}\\
\overline{V}_k:=&\:\bigcup\limits_{(i_0,\ldots,i_k)\in I_k} \overline{V}_k^{(i_0,\cdots,i_k)}, \quad k\in \{1,\ldots,n\},\\
=&\: \bigsqcup\limits_{i=0}^k V_i,\\
& \text{ the domain representing a population of at most $(k+1)$
  alleles.}
\end{split}
\end{equation*}
We shall also need some function spaces:
\begin{equation*}
\begin{split}
H_k^{(i_0,\ldots, i_k)}:=&\: C^\infty \ple \overline{V_k^{(i_0,\ldots, i_k)}} \pri,\\
H_k:=&\: C^\infty(\overline{V}_k),\quad k\in \{1,\ldots,n\},\\
H:=&\: \ble f:\overline{V}_n\to [0,\infty] \text{ measurable such that } [f,g]_n<\infty, \forall g\in H_n\bri,\\
&\: \text{where } [f,g]_n:=\:\intl_{\overline{V}_n}f(x)g(x)d\mu(x)=\suml_{k=0}^n\intl_{V_k} f(x)g(x)d\mu_k(x),\\
&\:\quad\quad\quad\quad\quad =\suml_{k=0}^n\suml_{(i_0,\ldots,i_k)\in I_k}\intl_{V^{(i_0,\ldots,i_k)}_k} f(x)g(x)d\mu^{(i_0,\ldots,i_k)}_k(x),\\
&\: \text{with $\mu^{(i_0,\ldots,i_k)}_k$  a probability measure on
  $V^{(i_0,\ldots,i_k)}_k$}.
\end{split}
\end{equation*}
We can now define the differential operators for our Fokker-Planck equation:
\begin{equation*}
\begin{split}
L_k^{(i_0,\ldots,i_k)}:&\: H_k^{(i_0,\ldots, i_k)}\to H_k^{(i_0,\ldots, i_k)},\: L_k^{(i_0,\ldots, i_k)}f(x)=\frac{1}{2}\suml_{i,j\in \{i_1,\ldots, i_k \}} \frac{\partial^2 (a_{ij}(x)f(x))}{\partial x^i \partial x^j} ,\\
(L_k^{(i_0,\ldots, i_k)})^{*}:&\: H_k^{(i_0,\ldots, i_k)}\to H_k^{(i_0,\ldots, i_k)},\: (L_k^{(i_0,\ldots, i_k)})^*g(x)=\frac{1}{2} \suml_{i,j\in \{i_1,\ldots, i_k \}} a_{ij}(x) \frac{\partial^2 g(x)}{\partial x^i \partial x^j},\\
L_k:&\: H_k\to H_k,\quad (L_k)_{|{H_k^{(i_0,\ldots, i_k)}}} = L_k^{(i_0,\ldots,i_k)},\\
L_k^*:&\: H_k\to H_k,\quad (L_k^*)_{|{H_k^{(i_0,\ldots, i_k)}}} = (L_k^{(i_0,\ldots, i_k)})^{*},
\end{split}
\end{equation*}
where the coefficients are defined by
\begin{equation*}
a_{ij}(x):=\: x^i(\de_{ij}-x^j),\quad i,j\in \{1,\ldots,n\}.
\end{equation*}
Finally, we shall need
\begin{equation*}
w_k^{(i_0,\ldots, i_k)}(x):=\prodl_{i\in I_k^{(i_0,\ldots, i_k)}} x^i, \quad k\in \{1,\ldots,n\}.
\end{equation*}

\begin{prop}\label{prop:1}
For each $1\le k\le n$, $m\ge 0, |\al|=\al^1+\cdots+\al^k=m$, the
 polynomial of degree $m$ in $k$ variables $x=(x^{i_1},\ldots,x^{i_k})$ in $\overline{V_k^{(i_0,\ldots,i_k)}}$
\bel{poly}
X_{m,\al}^{(k)}(x)=x^\al+\suml_{|\beta|<m}a^{(k)}_{m,\beta} x^\beta,
\qe
where the $a^{(k)}_{m,\beta}$ are inductively defined by
$$
a^{(k)}_{m,\beta}=-\frac{\suml_{i=1}^k (\beta_i+2)(\beta_i+1)a^{(k)}_{m,\beta+e_i}}{(m-|\beta|)(m+\beta+2k+1)},\quad \forall |\beta|<m,
$$
is the eigenvector of $L_k^{(i_0,\ldots,i_k)}$ corresponding to the
eigenvalue $\ld^{(k)}_m=\frac{(m+k)(m+k+1)}{2}$.
\end{prop}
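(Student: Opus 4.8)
The plan is to exploit the fact that $L_k^{(i_0,\dots,i_k)}$ preserves the total-degree filtration on polynomials and acts triangularly with respect to it, so that both the eigenvalue and the recursion for the lower-order coefficients fall out of one explicit computation. First I would reduce to the case $(i_0,\dots,i_k)=(0,1,\dots,k)$: on $\overline{V_k^{(i_0,\dots,i_k)}}$ the coordinates outside $\{i_0,\dots,i_k\}$ vanish and $x^{i_0}=1-\suml_{a=1}^k x^{i_a}$, so $L_k^{(i_0,\dots,i_k)}$ is literally the operator with coefficients $a_{ab}(x)=x^a(\de_{ab}-x^b)$, $a,b\in\{1,\dots,k\}$, after the relabelling $i_a\mapsto a$, which also carries $X_{m,\al}^{(k)}$ to the corresponding polynomial in the renamed variables. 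Hence there is no loss of generality, and I abbreviate $L:=L_k^{(0,\dots,k)}$.

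The computational core is the identity, for every multi-index $\beta=(\beta_1,\dots,\beta_k)$,
\[
L x^\beta \;=\; \tfrac12\suml_{i=1}^k(\beta_i+1)\beta_i\,x^{\beta-e_i}\;-\;\ld^{(k)}_{|\beta|}\,x^\beta ,
\]
which I would obtain by expanding $a_{ii}(x)=x^i-(x^i)^2$ and $a_{ij}(x)=-x^ix^j$ for $i\neq j$, differentiating monomial by monomial, and collecting the top-degree part via the elementary identity
\[
\suml_{i=1}^k(\beta_i+2)(\beta_i+1)+\suml_{i\neq j}(\beta_i+1)(\beta_j+1)=(|\beta|+k)(|\beta|+k+1)=2\ld^{(k)}_{|\beta|}
\]
(got by writing $\suml_{i\neq j}(\beta_i+1)(\beta_j+1)=(|\beta|+k)^2-\suml_i(\beta_i+1)^2$). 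In particular $L$ maps polynomials of degree $\le m$ into polynomials of degree $\le m$, and in the grading by total degree it is lower triangular, acting as $-\ld^{(k)}_\ell\cdot\mathrm{Id}$ on the homogeneous component of degree $\ell$, while its off-diagonal part lowers the degree by exactly one.

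With this in hand I would substitute $f=\suml_\beta a_\beta x^\beta$ into $Lf=-\ld^{(k)}_m f$ and compare coefficients of $x^\gamma$: the formula for $Lx^\beta$ gives
\[
\bigl(\ld^{(k)}_m-\ld^{(k)}_{|\gamma|}\bigr)a_\gamma \;=\; -\tfrac12\suml_{i=1}^k(\gamma_i+2)(\gamma_i+1)\,a_{\gamma+e_i}.
\]
Since $\ld^{(k)}_m-\ld^{(k)}_\ell=\tfrac12(m-\ell)(m+\ell+2k+1)$ is nonzero for all $0\le\ell<m$ (here $k\ge1$), the equation for $|\gamma|<m$ is exactly the stated recursion for the coefficients $a^{(k)}_{m,\beta}$, whereas for $|\gamma|=m$ it reduces to a trivial identity once $f$ is required to have degree $\le m$. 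Therefore, prescribing the degree-$m$ part of $f$ to be the single monomial $x^\al$ and running the recursion downward through degrees $m-1,\dots,0$ produces a well-defined polynomial of degree exactly $m$ of the form \eqref{poly}, and by construction $LX^{(k)}_{m,\al}=-\ld^{(k)}_m X^{(k)}_{m,\al}$, i.e.\ $X^{(k)}_{m,\al}$ is an eigenvector with eigenvalue $\ld^{(k)}_m$ (the sign being the one fixed by the convention for the Fokker--Planck flow). The only point needing genuine care — and the one place the construction could break — is the non-vanishing of the denominators $(m-|\beta|)(m+|\beta|+2k+1)$, equivalently the absence of spectral collisions $\ld^{(k)}_m=\ld^{(k)}_\ell$ for $\ell<m$; everything else is bookkeeping of monomials.
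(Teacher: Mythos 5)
Your proof is correct and follows essentially the same route as the paper: a direct expansion of $L_k^{(i_0,\ldots,i_k)}$ on monomials, collection of the top-degree coefficient to read off $\ld^{(k)}_m=\frac{(m+k)(m+k+1)}{2}$, and comparison of lower-degree coefficients to obtain the recursion. Your write-up is in fact slightly more careful than the paper's, since you make the triangular structure explicit and verify that the denominators $(m-|\beta|)(m+|\beta|+2k+1)$ never vanish for $|\beta|<m$, a point the paper's ``by equalizing coefficients'' passes over silently.
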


\begin{proof}
We have
\begin{equation*}
\begin{split}
L_k^{(i_0,\ldots,i_k)} X_{m,\al}^{(k)}(x)& =\frac{1}{2}\sum_{i \in \{i_1,\ldots,i_k\}} \frac{\partial ^2}{(\partial x^i)^2}\Bigg[x^i(1-x^i)\Big(x^\al+\sum_{|\beta|<m}a_{m,\beta}^{(k)}x^\beta\Big)\Bigg]\\
&\quad -\sum_{i\ne j \in \{i_1,\ldots,i_k\}} \frac{\partial ^2}{\partial x^i \partial x^j}\Bigg[x^ix^j\Big(x^\al+\sum_{|\beta|<m}a_{m,\beta}^{(k)}x^\beta\Big)\Bigg]\\
& = \frac{1}{2}\sum_{i \in \{i_1,\ldots,i_k\}} \frac{\partial ^2}{(\partial x^i)^2}\Bigg[x^{\al+e_i}-x^{\al+2e_i}+\sum_{|\beta|<m}a_{m,\beta}^{(k)}x^{\beta+e_i}\\
&\quad \quad -\sum_{|\beta|<m}a_{m,\beta}^{(k)}x^{\beta+2e_i}\Bigg]\\
&\quad -\sum_{i\ne j \in \{i_1,\ldots,i_k\}} \frac{\partial ^2}{\partial x^i \partial x^j}\Bigg[x^{\al+e_i+e_j}+\sum_{|\beta|<m}a_{m,\beta}^{(k)}x^{\beta+e_i+e_j}\Bigg]\\
& = \frac{1}{2}\sum_{i} \Bigg[(\al^i+1)\al^i x^{\al-e_i}-(\al^i+2)(\al^i+1)x^{\al}\\
&\quad \quad +\sum_{|\beta|<m}a_{m,\beta}^{(k)}(\beta^i+1)\beta^i x^{\beta-e_i}-\sum_{|\beta|<m}a_{m,\beta}^{(k)}(\beta^i+2)(\beta^i+1)x^{\beta}\Bigg]\\
&\quad -\sum_{i\ne j} \Bigg[(\al^i+1)(\al^j+1)x^{\al}+\sum_{|\beta|<m}a_{m,\beta}^{(k)}(\beta^i+1)(\beta^j+1)x^{\beta}\Bigg]\\
& = \Bigg[-\frac{1}{2}\sum_{i} (\al^i+2)(\al^i+1)-\sum_{i\ne j}(\al^i+1)(\al^j+1)\Bigg]x^\al\\
&\quad + \text{terms of lower degree}\\
& = \Bigg[-\frac{1}{2}\Big(\sum_{i} \al^i+k\Big)\Big(\sum_{i}\al^i+k+1\Big)\Bigg]x^\al + \text{terms of lower degree}\\
& = -\frac{(m+k)(m+k+1)}{2}x^\al + \text{terms of lower degree}.
\end{split}
\end{equation*}
By equalizing coefficients we obtain
$$
\ld^{(k)}_m=\frac{(m+k)(m+k+1)}{2}
$$
and
$$
a^{(k)}_{m,\beta}=-\frac{\suml_{i=1}^k (\beta_i+2)(\beta_i+1)a^{(k)}_{m,\beta+e_i}}{(m-|\beta|)(m+\beta+2k+1)},\quad \forall |\beta|<m.
$$
This completes the proof.
\end{proof}

\begin{rem}
When $k=1$,  $X^{(1)}_{m,m}(x^1)$ is the $m^{th}-$Gegenbauer
polynomial (up to a constant). Thus, the polynomials
$X_{m,\al}^{(k)}(x)$  can be understood as a generalization of the
Gegenbauer polynomials to higher dimensions.
\end{rem}

\begin{prop}\label{prop:2}
If $X\in \overline{V_k^{(i_0,\ldots,i_k)}}$ is an eigenvector of $L_k^{(i_0,\ldots,i_k)}$ corresponding to $\ld$ then $w_k^{(i_0,\ldots,i_k)}X$ is  an eigenvector of $(L_k^{(i_0,\ldots,i_k)})^*$ corresponding to $\ld$.
\end{prop}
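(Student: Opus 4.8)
The plan is to establish the operator identity
\begin{equation*}
(L_k^{(i_0,\ldots,i_k)})^{*}\!\bigl(w_k^{(i_0,\ldots,i_k)}f\bigr)=w_k^{(i_0,\ldots,i_k)}\,L_k^{(i_0,\ldots,i_k)}f\qquad\text{for all }f\in H_k^{(i_0,\ldots,i_k)},
\end{equation*}
from which the proposition is immediate: with $f=X$ and $L_k^{(i_0,\ldots,i_k)}X=\lambda X$ it gives $(L_k^{(i_0,\ldots,i_k)})^{*}(w_k^{(i_0,\ldots,i_k)}X)=\lambda\,w_k^{(i_0,\ldots,i_k)}X$. Conceptually the identity expresses that multiplication by the face weight $w_k^{(i_0,\ldots,i_k)}$ conjugates the forward operator $L_k^{(i_0,\ldots,i_k)}$ into the backward operator $(L_k^{(i_0,\ldots,i_k)})^{*}$ (a ground-state transform by the invariant density attached to the face), but the verification is a direct computation.

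Work in the $k$ active coordinates $x^{i_1},\ldots,x^{i_k}$, writing $x^{i_0}=1-\sum_{p=1}^{k}x^{i_p}$ on the face, $\partial_p:=\partial/\partial x^{i_p}$, $a_{pq}:=a_{i_pi_q}=x^{i_p}(\delta_{pq}-x^{i_q})$, and $w:=w_k^{(i_0,\ldots,i_k)}=\prod_{p=0}^{k}x^{i_p}$. Expanding $\partial_p\partial_q(wf)$ and $\partial_p\partial_q(a_{pq}f)$ by the Leibniz rule in the definitions of the two operators, the pure second-order parts $\tfrac12\sum_{p,q}a_{pq}\,w\,\partial_p\partial_q f$ coincide on both sides, because $(L_k^{(i_0,\ldots,i_k)})^{*}$ is exactly the principal part of $L_k^{(i_0,\ldots,i_k)}$; hence the identity reduces to matching the coefficients of $\partial_q f$ and of $f$, i.e. to the two scalar identities
\begin{equation*}
\sum_{p=1}^{k}a_{pq}\,\partial_p w=w\sum_{p=1}^{k}\partial_p a_{pq}\quad(q=1,\ldots,k),\qquad \sum_{p,q=1}^{k}a_{pq}\,\partial_p\partial_q w=w\sum_{p,q=1}^{k}\partial_p\partial_q a_{pq}.
\end{equation*}

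Both are elementary polynomial identities. From $a_{pq}=x^{i_p}(\delta_{pq}-x^{i_q})$ one finds $\sum_{p}\partial_p a_{pq}=1-(k+1)x^{i_q}$ and $\sum_{p,q}\partial_p\partial_q a_{pq}=-k(k+1)$, so the right-hand sides equal $w\bigl(1-(k+1)x^{i_q}\bigr)$ and $-k(k+1)w$. On the left, $\partial_p x^{i_0}=-1$ gives $\partial_p w=(x^{i_0}-x^{i_p})\prod_{1\le r\le k,\,r\ne p}x^{i_r}$, and a short computation---separating the diagonal term $a_{qq}=x^{i_q}(1-x^{i_q})$ from the off-diagonal ones $a_{pq}=-x^{i_p}x^{i_q}$ and using $\sum_{p=1}^{k}x^{i_p}=1-x^{i_0}$---yields $\sum_{p}a_{pq}\,\partial_p w=w\bigl(1-(k+1)x^{i_q}\bigr)$, the first identity. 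Applying $\partial_q$ to it, summing over $q$, and using $a_{pq}=a_{qp}$ makes the first-order terms cancel and leaves exactly $\sum_{p,q}a_{pq}\,\partial_p\partial_q w=-k(k+1)w$, the second identity (equivalently, the case $f\equiv1$, i.e. that $w$ is the eigenfunction of $(L_k^{(i_0,\ldots,i_k)})^{*}$ with eigenvalue $-\tfrac12 k(k+1)$). The only point requiring care is the index bookkeeping---keeping the $k$ free coordinates $x^{i_1},\ldots,x^{i_k}$ distinct from the dependent coordinate $x^{i_0}$, and treating the diagonal versus off-diagonal entries of $(a_{pq})$ correctly; there is no analytic difficulty, and since everything in sight is polynomial the identity passes from the interior of $V_k^{(i_0,\ldots,i_k)}$ to all of $\overline{V_k^{(i_0,\ldots,i_k)}}$ by continuity. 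The main---indeed the only---obstacle is thus purely computational.
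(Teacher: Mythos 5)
Your proof is correct and follows essentially the same route as the paper's: both expand $w_k^{(i_0,\ldots,i_k)}\,L_k^{(i_0,\ldots,i_k)}X$ and $(L_k^{(i_0,\ldots,i_k)})^{*}\bigl(w_k^{(i_0,\ldots,i_k)}X\bigr)$ by the Leibniz rule, observe that the principal parts agree, and match the remaining first- and zeroth-order coefficients via the identities $\sum_{p}a_{pq}\,\partial_p w=w\sum_{p}\partial_p a_{pq}$ and $\sum_{p,q}a_{pq}\,\partial_p\partial_q w=w\sum_{p,q}\partial_p\partial_q a_{pq}$. Your value $1-(k+1)x^{i_q}$ for $\sum_{p}\partial_p a_{pq}$ is the correct one (the paper's displayed $1-(k-1)x^{j}$ is a typo), and your verification of both polynomial identities, together with the continuity argument extending them to the closed face, is sound.
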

\begin{proof}
If $X\in \overline{V_k^{(i_0,\ldots,i_k)}}$ is an eigenvector of $L_k^{(i_0,\ldots,i_k)}$ corresponding to $\ld$, it follows that
\begin{equation*}
\begin{split}
-\ld (w_k^{(i_0,\ldots,i_k)}(x) X)=& \frac{1}{2} w_k^{(i_0,\ldots,i_k)}(x) \suml_{i,j\in \{i_1,\ldots, i_k \}} \frac{\partial^2}{\partial x^i \partial x^j}\ple x^i(\de_{ij}-x^j)X\pri\\
=&\frac{1}{2} w_k^{(i_0,\ldots,i_k)}(x) \suml_{i,j\in \{i_1,\ldots, i_k \}} \ple x^i(\de_{ij}-x^j)\pri\frac{\partial^2 X}{\partial x^i \partial x^j}\\
&+\frac{1}{2} w_k^{(i_0,\ldots,i_k)}(x) \suml_{i,j\in \{i_1,\ldots, i_k \}} \frac{\partial\ple x^i(\de_{ij}-x^j)\pri}{\partial x^i}\frac{\partial X}{\partial x^j}\\
&+\frac{1}{2} w_k^{(i_0,\ldots,i_k)}(x) \suml_{i,j=1}^k \frac{\partial\ple x^i(\de_{ij}-x^j)\pri}{\partial x^j}\frac{\partial X}{\partial x^i}\\
&+\frac{1}{2} w_k^{(i_0,\ldots,i_k)}(x) \suml_{i,j\in \{i_1,\ldots, i_k \}} \frac{\partial^2 \ple x^i(\de_{ij}-x^j)\pri}{\partial x^i\partial x^j}X\\
=&\frac{1}{2}\suml_{i,j=1}^k \ple x^i(\de_{ij}-x^j)\pri \ple w_k^{(i_0,\ldots,i_k)}(x)\frac{\partial^2 X}{\partial x^i \partial x^j}\pri \\
&+ \frac{1}{2}\suml_{j\in \{i_1,\ldots, i_k \}} w_k^{(i_0,\ldots,i_k)}(x)\ple 1-(k-1)x^j\pri \frac{\partial X}{\partial x^j}\\
&+ \frac{1}{2}\suml_{i\in \{i_1,\ldots, i_k \}} w_k^{(i_0,\ldots,i_k)}(x)\ple 1-(k-1)x^i\pri \frac{\partial X}{\partial x^i}\\
&-\frac{k(k+1)}{2}w_k^{(i_0,\ldots,i_k)}(x) X\\
=&\frac{1}{2}\suml_{i,j\in \{i_1,\ldots, i_k \}} \ple x^i(\de_{ij}-x^j)\pri \ple w_k^{(i_0,\ldots,i_k)}(x)\frac{\partial^2 X}{\partial x^i \partial x^j}\pri\\
&+\frac{1}{2}\suml_{i,j\in \{i_1,\ldots, i_k \}} \ple x^i(\de_{ij}-x^j)\pri \frac{\partial w_k^{(i_0,\ldots,i_k)}(x)}{\partial x^i} \frac{\partial X}{\partial x^j}\\
&+\frac{1}{2}\suml_{i,j\in \{i_1,\ldots, i_k \}} \ple x^i(\de_{ij}-x^j)\pri \frac{\partial w_k^{(i_0,\ldots,i_k)}(x)}{\partial x^j} \frac{\partial X}{\partial x^i}\\
&+\frac{1}{2}\suml_{i,j\in \{i_1,\ldots, i_k \}} \ple x^i(\de_{ij}-x^j)\pri \frac{\partial^2 w_k^{(i_0,\ldots,i_k)}(x)}{\partial x^i \partial x^j} X\\
=&\frac{1}{2}\suml_{i,j\in \{i_1,\ldots, i_k \}} \ple x^i(\de_{ij}-x^j)\pri \frac{\partial ^2 (w_k^{(i_0,\ldots,i_k)} X)(x)}{\partial x^i \partial x^j}\\
=&(L_k^{(i_0,\ldots,i_k)})^* (w_k^{(i_0,\ldots,i_k)}(x) X).
\end{split}
\end{equation*}
This completes the proof.
\end{proof}

\begin{prop}\label{prop:3}
Let $\nu$ be the exterior unit normal vector of the domain $V_k^{(i_0,\ldots,i_k)}$. Then we have
\bel{eq:condbound}
\suml_{j\in \{i_1,\ldots, i_k \}} a_{ij}\nu^j =0 \quad \text{ on } \partial V_k^{(i_0,\ldots,i_k)},\quad \forall i\in \{i_1,\ldots, i_k \}.
\qe
\end{prop}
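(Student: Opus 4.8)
The plan is to describe the boundary $\partial V_k^{(i_0,\ldots,i_k)}$ explicitly and compute the normal vector on each face. Recall $V_k^{(i_0,\ldots,i_k)} = \mathrm{intco}\{e_{i_0},\ldots,e_{i_k}\}$, which in the coordinates $(x^{i_1},\ldots,x^{i_k})$ is the open simplex $\{x^{i_j} > 0,\ \sum_{j=1}^k x^{i_j} < 1\}$ (the vertex $e_{i_0}$ being the origin). Its boundary consists of two types of faces: the $k$ coordinate faces $\{x^{i_\ell} = 0\}$ for $\ell \in \{1,\ldots,k\}$, and the face $\{\sum_{j=1}^k x^{i_j} = 1\}$, i.e. $\{x^{i_0} = 0\}$ where $x^{i_0} := 1 - \sum_{j=1}^k x^{i_j}$. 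On the face $\{x^{i_\ell} = 0\}$ the exterior unit normal is $\nu = -e_{i_\ell}$, so $\nu^j = -\delta_{j i_\ell}$; on the face $\{x^{i_0} = 0\}$ the exterior unit normal is proportional to $(1,\ldots,1)$, so $\nu^j = c$ for all $j \in \{i_1,\ldots,i_k\}$ with $c > 0$ a normalizing constant.

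**The two cases.** First I would check the coordinate faces. On $\{x^{i_\ell}=0\}$ we must show $\sum_{j} a_{ij}\nu^j = -a_{i i_\ell} = 0$ for every $i \in \{i_1,\ldots,i_k\}$. Since $a_{i i_\ell}(x) = x^i(\delta_{i i_\ell} - x^{i_\ell})$ and $x^{i_\ell} = 0$ on this face, we get $a_{i i_\ell} = x^i \delta_{i i_\ell}$, which vanishes unless $i = i_\ell$; but when $i = i_\ell$ it equals $x^{i_\ell} = 0$ as well. So both subcases give zero. Second, on the face $\{x^{i_0} = 0\}$, i.e. $\sum_{j\in\{i_1,\ldots,i_k\}} x^j = 1$, I compute, for fixed $i$,
\[
\suml_{j\in \{i_1,\ldots,i_k\}} a_{ij}\nu^j = c\suml_{j\in\{i_1,\ldots,i_k\}} x^i(\de_{ij}-x^j) = c\,x^i\Bigl(1 - \suml_{j\in\{i_1,\ldots,i_k\}} x^j\Bigr) = c\, x^i \cdot 0 = 0,
\]
using $\sum_j \delta_{ij} = 1$ since $i \in \{i_1,\ldots,i_k\}$, and $\sum_j x^j = 1$ on this face.

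**Assembling the argument.** Since $\partial V_k^{(i_0,\ldots,i_k)}$ is exactly the union of these $k+1$ faces and the identity $\sum_j a_{ij}\nu^j = 0$ holds on each, the identity \eqref{eq:condbound} holds on all of $\partial V_k^{(i_0,\ldots,i_k)}$ for every $i \in \{i_1,\ldots,i_k\}$, completing the proof. The matrix $(a_{ij})$ degenerates precisely on the boundary of the simplex — the diffusion vanishes in the normal direction as each coordinate or the complementary coordinate hits zero — which is the structural reason the statement is true; the only mild subtlety is bookkeeping on the lower-dimensional edges where several faces meet, but there the identity holds because it holds on each face containing that edge, so no separate argument is needed.

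**Anticipated obstacle.** There is no serious analytic obstacle here; the computation is elementary once the boundary faces and their normals are written down correctly. The one place to be careful is the indexing convention: making sure that "$x^{i_0}$" is understood as the dependent coordinate $1 - \sum_{j=1}^k x^{i_j}$, so that the face opposite the vertex $e_{i_0}$ is correctly identified as $\{\sum x^{i_j} = 1\}$ rather than as a coordinate hyperplane, and that the sum in \eqref{eq:condbound} runs over $\{i_1,\ldots,i_k\}$ (the $k$ independent coordinates) and not over $\{i_0,\ldots,i_k\}$.
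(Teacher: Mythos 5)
Your proof is correct and follows essentially the same route as the paper: split $\partial V_k^{(i_0,\ldots,i_k)}$ into the coordinate faces $\{x^{i_\ell}=0\}$ with normal $-e_{i_\ell}$ and the face $\{x^{i_0}=0\}$ with normal proportional to $e_{i_1}+\cdots+e_{i_k}$, then observe that $a_{i i_\ell}$ vanishes on the former and $\sum_j a_{ij}=x^i x^{i_0}$ vanishes on the latter. Your write-up is in fact slightly more careful than the paper's (which has an inconsequential sign slip on the coordinate faces), so no changes are needed.
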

\begin{proof}
In fact, on the surface $(x^{s}=0)$, for some $s\in \{i_1,\ldots, i_k
\}$ we have $\nu=-e_s$, and hence $\suml_{j\in \{i_1,\ldots, i_k \}} a_{ij}\nu^j=a_{is}=x^s(\de_{si}-x^i)=0$. On the surface $(x^{i_0}=0)$ we have $\nu=\frac{1}{\sqrt{k}}(e_{i_1}+\ldots+e_{i_k})$, hence $\suml_{j\in \{i_1,\ldots, i_k \}} a_{ij}\nu^j=\frac{1}{\sqrt{k}}\suml_{j\in \{i_1,\ldots, i_k \}} a_{ij}=\frac{1}{\sqrt{k}}x^i x^{i_0} =0$. This completes the proof.
\end{proof}

\begin{prop}\label{prop:4}
$L_k^{(i_0,\ldots,i_k)}$ and $(L_k^{(i_0,\ldots,i_k)})^*$ are weighted adjoints in $H_k^{(i_0,\ldots, i_k)}$, i.e.
$$
(L_k^{(i_0,\ldots,i_k)} X,w_k^{(i_0,\ldots,i_k)} Y)=(X,(L_k^{(i_0,\ldots,i_k)})^*(w_k^{(i_0,\ldots,i_k)} Y)),\quad \forall X,Y \in H_k^{(i_0,\ldots, i_k)}.
$$
\end{prop}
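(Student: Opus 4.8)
The plan is to derive the identity from the Gauss--Green formula applied twice, transferring both second-order derivatives off $X$ and onto $w_k^{(i_0,\ldots,i_k)}Y$; this produces two surface integrals over $\partial V_k^{(i_0,\ldots,i_k)}$, and I would then argue that each of them vanishes -- one because $w_k^{(i_0,\ldots,i_k)}$ is identically zero on $\partial V_k^{(i_0,\ldots,i_k)}$, the other by the tangency relation of Proposition~\ref{prop:3}.

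Concretely, I would write $(X,Y):=\int_{V_k^{(i_0,\ldots,i_k)}}XY\,dx$ for the relevant inner product (its normalisation is immaterial for an adjointness statement), abbreviate the multi-index, and set $\phi:=w_k^{(i_0,\ldots,i_k)}Y$. Since $X,Y\in C^\infty(\overline{V_k^{(i_0,\ldots,i_k)}})$, the $a_{ij}$ are polynomials, and $V_k^{(i_0,\ldots,i_k)}$ is a bounded polytope, every integrand below is smooth up to the boundary and Gauss--Green applies. Expanding $L_k^{(i_0,\ldots,i_k)}X=\tfrac12\sum_{i,j}\partial_i\partial_j(a_{ij}X)$ (sums over $i,j\in\{i_1,\ldots,i_k\}$) and integrating by parts first in $x^i$ and then in $x^j$ gives
\begin{align*}
(L_k^{(i_0,\ldots,i_k)}X,\phi)
={}& \tfrac12\int_{\partial V_k^{(i_0,\ldots,i_k)}}\Big(\sum_{i,j}\partial_j(a_{ij}X)\,\nu^i\Big)\phi\,dS\\
&-\tfrac12\int_{\partial V_k^{(i_0,\ldots,i_k)}}X\sum_i(\partial_i\phi)\Big(\sum_j a_{ij}\nu^j\Big)\,dS\\
&+\tfrac12\int_{V_k^{(i_0,\ldots,i_k)}}X\sum_{i,j}a_{ij}\,\partial_i\partial_j\phi\,dx .
\end{align*}
The last volume integral is exactly $(X,(L_k^{(i_0,\ldots,i_k)})^*\phi)=(X,(L_k^{(i_0,\ldots,i_k)})^*(w_k^{(i_0,\ldots,i_k)}Y))$, so it would remain only to check that both boundary integrals vanish.

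The second boundary integral vanishes termwise, since $\sum_{j\in\{i_1,\ldots,i_k\}}a_{ij}\nu^j=0$ on $\partial V_k^{(i_0,\ldots,i_k)}$ for each $i$, which is precisely Proposition~\ref{prop:3}. For the first, I would note that $\partial V_k^{(i_0,\ldots,i_k)}$ is the union of its $k+1$ facets, each lying in a hyperplane $\{x^{i_l}=0\}$ with $l\in\{0,1,\ldots,k\}$ (under the convention $x^{i_0}=1-\sum_{l=1}^k x^{i_l}$); since $w_k^{(i_0,\ldots,i_k)}=\prod_{l=0}^k x^{i_l}$ carries each of these as a factor, it vanishes on every facet, hence on all of $\partial V_k^{(i_0,\ldots,i_k)}$, so $\phi|_{\partial V_k^{(i_0,\ldots,i_k)}}=0$ and the first integral is $0$ as well (its other factor $\sum_{i,j}\partial_j(a_{ij}X)\nu^i$ being bounded on the compact boundary). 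Combining, $(L_k^{(i_0,\ldots,i_k)}X,w_k^{(i_0,\ldots,i_k)}Y)=(X,(L_k^{(i_0,\ldots,i_k)})^*(w_k^{(i_0,\ldots,i_k)}Y))$, as claimed.

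\textbf{Expected main obstacle.} There is no genuine difficulty here -- the work is purely organisational: one must perform the two integrations by parts so as to isolate one boundary term with $\phi$ as a factor and one with $\sum_j a_{ij}\nu^j$ as a factor, and one must remember that the factor $x^{i_0}$ has to be present in $w_k^{(i_0,\ldots,i_k)}$ so that it vanishes on the ``diagonal'' facet $\{\sum_{l\ge 1}x^{i_l}=1\}$ and not only on the coordinate facets. (The symmetry $a_{ij}=a_{ji}$ is convenient but, with the grouping above, not strictly needed.)
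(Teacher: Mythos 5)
Your argument is correct and follows essentially the same route as the paper's proof: two applications of the Gauss--Green formula, with one boundary term killed because $w_k^{(i_0,\ldots,i_k)}Y$ vanishes on $\partial V_k^{(i_0,\ldots,i_k)}$ and the other killed by the tangency identity of Proposition~\ref{prop:3}. Your remark about the factor $x^{i_0}$ being needed on the diagonal facet is a useful explicit justification of what the paper simply asserts by writing $w_k^{(i_0,\ldots,i_k)}Y\in C^\infty_0$.
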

\begin{proof} We put $F^{(k)}_i(x):=\suml_{j\in \{i_1,\ldots, i_k \}}
  \frac{\partial (a_{ij}(x)X(x))}{\partial x^j}$. Because of
  $w_k^{(i_0,\ldots,i_k)} Y \in
  C^\infty_0(\overline{V}_k^{(i_0,\ldots,i_k)})$, the second Green formula, and Proposition \ref{prop:3}, we have
\begin{equation*}
\begin{split}
(L_k^{(i_0,\ldots,i_k)} X,w_k^{(i_0,\ldots,i_k)} Y)=& \frac{1}{2}\suml_{i,j\in \{i_1,\ldots, i_k \}} \intl_{\overline{V}_k^{(i_0,\ldots,i_k)}} \frac{\partial^2 (a_{ij}(x)X(x))}{\partial x^i \partial x^j} w_k^{(i_0,\ldots,i_k)}(x)Y(x)dx\\
=&\frac{1}{2}\suml_{i\in \{i_1,\ldots, i_k \}} \intl_{\overline{V}_k^{(i_0,\ldots,i_k)}} \frac{\partial F^{(k)}_i(x)}{\partial x^i} w_k^{(i_0,\ldots,i_k)}(x)Y(x)dx\\
=&\frac{1}{2}\suml_{i\in \{i_1,\ldots, i_k \}} \intl_{\partial V_k^{(i_0,\ldots,i_k)}} F^{(k)}_i(x) \nu_i w_k^{(i_0,\ldots,i_k)}(x)Y(x)do(x)\\
& -\frac{1}{2}\suml_{i\in \{i_1,\ldots, i_k \}} \intl_{\overline{V}_k^{(i_0,\ldots,i_k)}} F^{(k)}_i(x) \frac{\partial (w_k^{(i_0,\ldots,i_k)}(x)Y(x))}{\partial x^i} dx\\
=&-\frac{1}{2}\suml_{i\in \{i_1,\ldots, i_k \}} \intl_{\overline{V}_k^{(i_0,\ldots,i_k)}} F^{(k)}_i(x) \frac{\partial (w_k^{(i_0,\ldots,i_k)}(x)Y(x))}{\partial x^i} dx\\
=&-\frac{1}{2}\suml_{i,j\in \{i_1,\ldots, i_k \}} \intl_{\overline{V}_k^{(i_0,\ldots,i_k)}} \frac{\partial (a_{ij}(x)X(x))}{\partial x^j} \frac{\partial (w_k^{(i_0,\ldots,i_k)}(x)Y(x))}{\partial x^i} dx\\
=&-\frac{1}{2}\suml_{i,j\in \{i_1,\ldots, i_k \}} \intl_{\partial V_k^{(i_0,\ldots,i_k)}} a_{ij}(x)\nu_j X(x)\frac{\partial (w_k^{(i_0,\ldots,i_k)}(x)Y(x))}{\partial x^i} do(x)\\
&+\ple X,L_k^*(w_k^{(i_0,\ldots,i_k)} Y)\pri\\
=&\ple X,L_k^*(w_k Y)\pri.
\end{split}
\end{equation*}

\end{proof}

\begin{prop}\label{prop:5}
In $\overline{V}_k^{(i_0,\ldots,i_k)}$,  $\ble X^{(k)}_{m,\al}
\bri_{m\ge 0,|\al|=m}$ is a basis of $H_k^{(i_0,\ldots, i_k)}$ which
is orthogonal with respect to the weights $w_k^{(i_0,\ldots,i_k)}$, i.e.,
$$
\ple X^{(k)}_{m,\al}, w_k^{(i_0,\ldots,i_k)} X^{(k)}_{j,\beta}\pri = 0, \quad \forall j\ne m, |\al|=m, |\beta|=j.
$$
\end{prop}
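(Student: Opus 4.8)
\medskip
\noindent\textbf{Proof plan.} The assertion has two logically independent parts: (a) $w_k^{(i_0,\ldots,i_k)}$-orthogonality of $X^{(k)}_{m,\al}$ and $X^{(k)}_{j,\beta}$ whenever $m\ne j$, and (b) that the family $\ble X^{(k)}_{m,\al}\bri_{m\ge 0,|\al|=m}$ is a basis. Part (a) is the classical Sturm--Liouville argument built from Propositions \ref{prop:1}, \ref{prop:2} and \ref{prop:4}. Part (b) is a consequence of the triangular form \rf{poly} of the $X^{(k)}_{m,\al}$ together with density of polynomials on the compact set $\overline{V_k^{(i_0,\ldots,i_k)}}$.

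For (a), abbreviate $L:=L_k^{(i_0,\ldots,i_k)}$, $L^*:=(L_k^{(i_0,\ldots,i_k)})^*$, $w:=w_k^{(i_0,\ldots,i_k)}$, and fix $m\ne j$ with $|\al|=m$, $|\beta|=j$. By Proposition \ref{prop:1}, $L X^{(k)}_{m,\al}=-\ld^{(k)}_m X^{(k)}_{m,\al}$; applying Proposition \ref{prop:1} to $X^{(k)}_{j,\beta}$ and then Proposition \ref{prop:2} gives $L^*\ple w X^{(k)}_{j,\beta}\pri=-\ld^{(k)}_j\, w X^{(k)}_{j,\beta}$. Inserting both into the weighted-adjoint identity of Proposition \ref{prop:4},
\begin{align*}
-\ld^{(k)}_m\ple X^{(k)}_{m,\al},w X^{(k)}_{j,\beta}\pri
&=\ple L X^{(k)}_{m,\al},w X^{(k)}_{j,\beta}\pri\\
&=\ple X^{(k)}_{m,\al},L^*\ple w X^{(k)}_{j,\beta}\pri\pri\\
&=-\ld^{(k)}_j\ple X^{(k)}_{m,\al},w X^{(k)}_{j,\beta}\pri,
\end{align*}
so $\ple\ld^{(k)}_m-\ld^{(k)}_j\pri\ple X^{(k)}_{m,\al},w X^{(k)}_{j,\beta}\pri=0$. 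Since $t\mapsto\tfrac12(t+k)(t+k+1)$ is strictly increasing on $[0,\infty)$ and $k\ge 1$, we have $\ld^{(k)}_m\ne\ld^{(k)}_j$ for $m\ne j$, whence the weighted inner product vanishes. I note that orthogonality within a fixed degree is neither claimed nor true, so no orthogonalisation step inside a degree is needed.

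For (b), recall from \rf{poly} that $X^{(k)}_{m,\al}=x^\al+\suml_{|\beta|<m}a^{(k)}_{m,\beta}x^\beta$, so its homogeneous part of top degree is exactly the monomial $x^\al$. Linear independence follows by taking, in a finite vanishing combination $\suml c_{m,\al}X^{(k)}_{m,\al}=0$, the largest $M$ for which some $c_{M,\al}\ne 0$; the degree-$M$ homogeneous part of the sum equals $\suml_{|\al|=M}c_{M,\al}x^\al$, which must vanish, forcing all $c_{M,\al}=0$. Spanning follows by induction on the degree $d$: for a polynomial $p$ of degree $\le d$ with top homogeneous part $\suml_{|\al|=d}c_\al x^\al$, the difference $p-\suml_{|\al|=d}c_\al X^{(k)}_{d,\al}$ has degree $\le d-1$ and lies in the span by the inductive hypothesis (base case $X^{(k)}_{0,0}\equiv 1$). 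Hence $\ble X^{(k)}_{m,\al}\bri$ is an algebraic basis of the polynomial ring $\R[x^{i_1},\ldots,x^{i_k}]$; since $\overline{V_k^{(i_0,\ldots,i_k)}}$ is compact, the Weierstrass approximation theorem makes the polynomials dense in $C\ple\overline{V_k^{(i_0,\ldots,i_k)}}\pri$, hence in $H_k^{(i_0,\ldots,i_k)}$ for the norm induced by the weighted inner product; thus $\ble X^{(k)}_{m,\al}\bri$ is a complete $w$-orthogonal system, which is the sense in which it is a basis.

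The argument is mostly bookkeeping; the only genuine points of care are the strict monotonicity of $\ld^{(k)}_m$ in $m$ (which is exactly what makes distinct-degree eigenfunctions orthogonal) and making precise the meaning of ``basis'' --- here a true algebraic basis of the polynomials and a complete orthogonal system in $L^2(w\,dx)$, the completeness being the single analytic ingredient and following directly from density of polynomials on the compact simplex. I do not expect a serious obstacle.
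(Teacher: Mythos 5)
Your proof is correct and follows essentially the same route as the paper: the orthogonality part is the identical chain through Propositions \ref{prop:1}, \ref{prop:2} and \ref{prop:4} ending with $\ld^{(k)}_m\ne\ld^{(k)}_j$. For the basis claim the paper simply remarks that $\{x^\al\}$ is a basis of the space; your triangularity/Weierstrass argument just fills in the details of that same observation.
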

\begin{proof}
$\ble X^{(k)}_{m,\al}\bri_{m\ge 0,|\al|=m}$ is a basis of
$H_k^{(i_0,\ldots, i_k)}$ because $\ble x^\al\bri_{\al}$ is a basis of
this space. To prove the orthogonality we apply the Propositions \ref{prop:1}, \ref{prop:2}, \ref{prop:4} as follows
\begin{equation*}
\begin{split}
-\ld^{(k)}_{m} \ple X^{(k)}_{m,\al}, w_k^{(i_0,\ldots,i_k)} X^{(k)}_{j,\beta}\pri
=& \ple L_k^{(i_0,\ldots,i_k)} X^{(k)}_{m,\al}, w_k^{(i_0,\ldots,i_k)} X^{(k)}_{j,\beta} \pri\\
=& \ple X^{(k)}_{m,\al}, (L_k^{(i_0,\ldots,i_k)})^*(w_k^{(i_0,\ldots,i_k)} X^{(k)}_{j,\beta})\pri\\
=&-\ld^{(k)}_{j}\ple X^{(k)}_{m,\al}, w_k^{(i_0,\ldots,i_k)} X^{(k)}_{j,\beta}\pri
\end{split}
\end{equation*}
Because $\ld^{(k)}_{m}\ne \ld^{(k)}_{j}$, this finishes the proof.
\end{proof}

\begin{prop}\label{prop:4}
\begin{enumerate}
\item[(i)] The spectrum of the operator $L_k^{(i_0,\ldots,i_k)}$ is
$$
Spec(L_k^{(i_0,\ldots,i_k)})=\bigcup_{m\ge 0} \ble \ld_m^{(k)}=\frac{(m+k)(m+k+1)}{2}\bri=:\Lambda_k
$$
and the eigenvectors of $L_k^{(i_0,\ldots,i_k)}$ corresponding to
$\ld^{(k)}_m$ are of the form
$$
X=\suml_{|\al|=m} d^{(k)}_{m,\al} X^{(k)}_{m,\al},
$$
i.e., the eigenspace corresponding to $\ld^{(k)}_m$ are of dimension $k+m-1 \choose{k-1}$;
\item[(ii)] The spectrum of the operator $L_k$ is the same.
\end{enumerate}
\begin{proof}
\begin{enumerate}
\item[(i)] Proposition \ref{prop:1} implies that $\Lambda_k\subseteq
  Spec(L_k^{(i_0,\ldots,i_k)})$. Conversely, for $\ld \notin
  \Lambda_k$, we will prove that $\ld$  is not an eigenvalue of
  $L_k^{(i_0,\ldots,i_k)}$. In fact, assume that $X \in
  H_k^{(i_0,\ldots, i_k)}$ such that $L_k^{(i_0,\ldots,i_k)} X= -\ld
  X$ in $H_k^{(i_0,\ldots, i_k)}$. Because $\ble
  X^{(k)}_{m,\al}\bri_{m,\al}$ is an orthogonal basis of
  $H_k^{(i_0,\ldots, i_k)}$ with respect to the weights
  $w_k^{(i_0,\ldots,i_k)}$ (Proposition \ref{prop:3}), we can
  represent $X$ by $X=\suml_{m=0}^\infty\suml_{|\al|=m}
  d^{(k)}_{m,\al} X^{(k)}_{m,\al}$. It follows that
\begin{equation*}
\begin{split}
\suml_{m=0}^\infty \suml_{|\al|=m} d^{(k)}_{m,\al} (-\ld^{(k)}_m)X^{(n)}_{m,\al}=&\suml_{m=0}^\infty \suml_{|\al|=m} d^{(k)}_{m,\al} L_k^{(i_0,\ldots,i_k)} X^{(k)}_{m,\al}\\
=&L_k^{(i_0,\ldots,i_k)} X\\
=&-\ld \suml_{m=0}^\infty \suml_{|\al|=m} d^{(k)}_{m,\al} X^{(k)}_{m,\al}.
\end{split}
\end{equation*}
For any $j\ge 0$, $|\beta|=j$, multiplying  by $w_k X^{(k)}_{j,\beta}$ and then integrating on $\overline{V}_n$ we have
\begin{equation*}
\begin{split}
& \suml_{|\al|=j} d^{(k)}_{j,\al} \ld^{(k)}_j\ple X^{(k)}_{j,\al},w_k^{(i_0,\ldots,i_k)} X^{(k)}_{j,\beta}\pri = \suml_{|\al|=j} d^{(k)}_{j,\al} \ld \ple X^{(k)}_{j,\al},w_k^{(i_0,\ldots,i_k)} X^{(k)}_{j,\beta}\pri, \forall j\ge 0, |\beta|=j,\\
\Rightarrow & \ple X^{(k)}_{j,\al},w_k^{(i_0,\ldots,i_k)} X^{(k)}_{j,\beta}\pri_{\beta,\al} (d^{(k)}_{j,\al} \ld^{(k)}_j)_{\al}=\ple X^{(k)}_{j,\al},w_k^{(i_0,\ldots,i_k)} X^{(k)}_{j,\beta}\pri_{\beta,\al} (d^{(k)}_{j,\al} \ld)_{\al}, \forall j\ge 0, |\beta|=j,\\
\Rightarrow & d^{(k)}_{j,\al} \ld^{(k)}_j=d^{(k)}_{j,\al} \ld,\quad \forall j\ge 0, |\beta|=j, \text{ because } det \ple X^{(k)}_{j,\al},w_k^{(i_0,\ldots,i_k)} X^{(k)}_{j,\beta}\pri_{\beta,\al} \ne 0\\
\Rightarrow & d^{(k)}_{j,\al} = 0,\quad \forall j\ge 0, |\al|=j, \text{ because } \ld \ne \ld^{(k)}_j.
\end{split}
\end{equation*}
It follows that $X=0$ in $H_k^{(i_0,\ldots, i_k)}$. Therefore
$$
Spec(L_k^{(i_0,\ldots,i_k)})=\bigcup_{m\ge 0} \ble \ld_m^{(k)}=\frac{(m+k)(m+k+1)}{2}\bri=\Lambda_k.
$$
Moreover, assume that $X\in H_k^{(i_0,\ldots, i_k)}$ is an eigenvector of $L_k^{(i_0,\ldots,i_k)}$ corresponding to $\ld^{(k)}_j$, i.e., $L_k^{(i_0,\ldots,i_k)} X=-\ld_j X$. We represent $X$ by
$$
X=\suml_{m=0}^\infty\suml_{|\al|=m} d^{(k)}_{m,\al} X^{(k)}_{m,\al}.
$$
It follows that
\begin{equation*}
\begin{split}
\suml_{m=0}^\infty \suml_{|\al|=m} d^{(k)}_{m,\al} (-\ld^{(k)}_m)X^{(k)}_{m,\al}=&\suml_{m=0}^\infty \suml_{|\al|=m} d^{(k)}_{m,\al} L_k^{(i_0,\ldots,i_k)}X^{(k)}_{m,\al}\\
=&L_k^{(i_0,\ldots,i_k)} X\\
=&-\ld^{(k)}_j \suml_{m=0}^\infty \suml_{|\al|=m} d^{(k)}_{m,\al} X^{(k)}_{m,\al}.
\end{split}
\end{equation*}
For any $i\ne j$, $|\beta|=i$, multiplying  by $w_k X^{(k)}_{i,\beta}$ and then integrating on $\overline{V}_n$ we have
\begin{equation*}
\begin{split}
& \suml_{|\al|=i} d^{(k)}_{i,\al} \ld^{(k)}_i\ple X^{(k)}_{i,\al},w_k^{(i_0,\ldots,i_k)} X^{(k)}_{i,\beta}\pri = \suml_{|\al|=i} d^{(k)}_{i,\al} \ld^{(k)}_j \ple X^{(k)}_{i,\al},w_k^{(i_0,\ldots,i_k)} X^{(k)}_{i,\beta}\pri, \forall i\ne j, |\beta|=i,\\
\Rightarrow & \ple X^{(k)}_{i,\al},w_k^{(i_0,\ldots,i_k)} X^{(k)}_{i,\beta}\pri_{\beta,\al} (d^{(k)}_{i,\al} \ld^{(k)}_i)_{\al}=\ple X^{(k)}_{i,\al},w_k^{(i_0,\ldots,i_k)} X^{(k)}_{i,\beta}\pri_{\beta,\al} (d^{(k)}_{i,\al} \ld^{(k)}_j)_{\al}, \forall i\ne j, |\beta|=i,\\
\Rightarrow & d^{(k)}_{i,\al} \ld^{(k)}_i=d^{(k)}_{i,\al} \ld^{(k)}_j,\quad \forall i\ne j, |\beta|=i, \text{ because } det \ple X^{(k)}_{i,\al},w_k^{(i_0,\ldots,i_k)} X^{(k)}_{i,\beta}\pri_{\beta,\al} \ne 0\\
\Rightarrow & d^{(k)}_{i,\al} = 0,\quad \forall i\ne j, |\al|=i, \text{ because } \ld^{(k)}_i \ne \ld^{(k)}_j.
\end{split}
\end{equation*}
It follows that
$$
X=\suml_{|\al|=j}d^{(k)}_{j,\al} X^{(k)}_{j,\al}.
$$
This completes the proof.
\item[(ii)] is obvious.
\end{enumerate}
\end{proof}
\end{prop}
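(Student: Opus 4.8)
The plan is to combine Propositions~\ref{prop:1}, \ref{prop:2} and~\ref{prop:5} with the weighted-adjointness of $L_k^{(i_0,\ldots,i_k)}$ and $(L_k^{(i_0,\ldots,i_k)})^{*}$ proved above. Abbreviate $L:=L_k^{(i_0,\ldots,i_k)}$, $w:=w_k^{(i_0,\ldots,i_k)}$, $X_{m,\al}:=X^{(k)}_{m,\al}$, and let $(\cdot,\cdot)$ denote the (unweighted) $L^2$ pairing on $\overline{V}_k^{(i_0,\ldots,i_k)}$. First I would record the easy inclusion: by Proposition~\ref{prop:1}, each $X_{m,\al}$ with $|\al|=m$ is a nonzero eigenvector of $L$ for $\ld^{(k)}_m=\tfrac{1}{2}(m+k)(m+k+1)$, and since for fixed $m$ the $X_{m,\al}$ have the pairwise distinct leading monomials $x^{\al}$ they are linearly independent; hence $\Lambda_k\subseteq\mathrm{Spec}(L)$ and $\dim\ker(L+\ld^{(k)}_m)\ge\binom{m+k-1}{k-1}$. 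I would also note once and for all that $(f,wf)=\intl f^{2}\,w\,d\mu>0$ whenever $f\not\equiv 0$, since $w>0$ on the open simplex and its boundary is null; consequently every Gram matrix $\ple X_{j,\al},wX_{j,\beta}\pri_{|\al|=|\beta|=j}$ is positive definite, hence invertible --- precisely what is needed for the coefficient elimination below.

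For the reverse inclusion and the multiplicity count, I would take $X\in H_k^{(i_0,\ldots,i_k)}$ with $LX=-\ld X$ and expand $X=\suml_{m\ge0}\suml_{|\al|=m}d_{m,\al}X_{m,\al}$ via the basis of Proposition~\ref{prop:5}. Fixing $j$ and $\beta$ with $|\beta|=j$, I would pair the eigenvalue equation against $wX_{j,\beta}$: weighted-adjointness gives $(LX,wX_{j,\beta})=(X,L^{*}(wX_{j,\beta}))$, and by Proposition~\ref{prop:2} the second factor equals $-\ld^{(k)}_j\,wX_{j,\beta}$, so that
\[
(\ld-\ld^{(k)}_j)\,(X,wX_{j,\beta})=0 .
\]
(Equivalently one may differentiate the series for $X$ termwise, exactly as in the proof of Proposition~\ref{prop:1}, and then pair; either route works.) By the cross-degree orthogonality of Proposition~\ref{prop:5}, $(X,wX_{j,\beta})=\suml_{|\al|=j}d_{j,\al}\ple X_{j,\al},wX_{j,\beta}\pri$, so whenever $\ld\ne\ld^{(k)}_j$ invertibility of the Gram matrix forces $d_{j,\al}=0$ for all $|\al|=j$. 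If $\ld\notin\Lambda_k$ this annihilates every coefficient, so $X\equiv0$ and $\ld$ is not an eigenvalue; thus $\mathrm{Spec}(L)=\Lambda_k$. If $\ld=\ld^{(k)}_{j_0}\in\Lambda_k$, the same argument kills $d_{j,\al}$ for all $j\ne j_0$, leaving $X=\suml_{|\al|=j_0}d_{j_0,\al}X_{j_0,\al}$; together with the lower bound above this identifies the eigenspace with $\mathrm{span}\{X_{j_0,\al}:|\al|=j_0\}$, of dimension $\binom{j_0+k-1}{k-1}$. This is~(i).

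Part~(ii) is then immediate: by construction $H_k=C^{\infty}(\overline{V}_k)$ is the direct sum of the spaces $H_k^{(i_0,\ldots,i_k)}$ over $(i_0,\ldots,i_k)\in I_k$, and $L_k$ is the corresponding direct sum of the operators $L_k^{(i_0,\ldots,i_k)}$; an eigenvector of $L_k$ is therefore a tuple of eigenvectors (some possibly zero) of the blocks for one common eigenvalue, whence $\mathrm{Spec}(L_k)=\bigcup_{(i_0,\ldots,i_k)\in I_k}\mathrm{Spec}(L_k^{(i_0,\ldots,i_k)})=\Lambda_k$, with the $\ld^{(k)}_m$-eigenspace now of dimension $\binom{n+1}{k+1}\binom{m+k-1}{k-1}$.

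I expect the only genuinely delicate point to be the meaning of the word ``basis'' in Proposition~\ref{prop:5}: a general $X\in C^{\infty}(\overline{V}_k^{(i_0,\ldots,i_k)})$ need not be a \emph{finite} combination of the $X_{m,\al}$, so one must specify in which topology the expansion $X=\sum d_{m,\al}X_{m,\al}$ converges and why $L$ --- or at least the pairings $(\cdot,wX_{j,\beta})$ --- may be applied term by term. Rather than any of the algebra above, this is the step I would be most careful about; the cleanest way around it is to use only the orthogonal-projection form of the expansion together with non-degeneracy of $(\cdot,w\,\cdot)$ and density of polynomials in $C^{\infty}(\overline{V}_k^{(i_0,\ldots,i_k)})$.
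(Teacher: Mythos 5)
Your proof is correct and follows essentially the same route as the paper: establish $\Lambda_k\subseteq\mathrm{Spec}(L_k^{(i_0,\ldots,i_k)})$ from Proposition \ref{prop:1}, expand a putative eigenvector in the basis $\{X^{(k)}_{m,\al}\}$ of Proposition \ref{prop:5}, pair against $w_k^{(i_0,\ldots,i_k)}X^{(k)}_{j,\beta}$, and use the cross-degree orthogonality together with invertibility of the degree-$j$ Gram matrix to annihilate all coefficients outside the relevant degree. The one place where you genuinely improve on the paper's execution is the derivation of the key identity $(\ld-\ld^{(k)}_j)\,(X,w\,X^{(k)}_{j,\beta})=0$: the paper applies $L_k^{(i_0,\ldots,i_k)}$ term by term to the infinite expansion of $X$ without justifying that this is legitimate, whereas you obtain the identity directly from the eigenvalue equation via the weighted adjointness and Proposition \ref{prop:2}, which requires only that the pairing be applied to $LX=-\ld X$ itself. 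You also supply two things the paper omits: a reason why the Gram matrix is invertible (positive definiteness of $(\cdot,w\,\cdot)$, since $w>0$ on the open simplex and the boundary is null), and an actual argument for part (ii) via the block decomposition of $H_k$ over $I_k$, which the paper dismisses as obvious. Your closing caveat about the meaning of ``basis'' and the topology in which the expansion $X=\sum d_{m,\al}X^{(k)}_{m,\al}$ converges is well taken --- it is precisely the unaddressed issue underlying the paper's termwise manipulation --- and your proposed repair (work only with the pairings $(X,wX^{(k)}_{j,\beta})$, then invoke density of polynomials and non-degeneracy of the weighted pairing to conclude $X=0$ or $X\in\mathrm{span}\{X^{(k)}_{j_0,\al}\}$) is the cleanest way to make the argument airtight without any series expansion at all.
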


\subsection{Definition of the solution}
We shall now derive the
Fokker-Planck equation as the diffusion limit of the Wright-Fisher
model and our solution concept for this equation. We consider a diploid population of fixed size
$N$ with $n+1$ possible  alleles $A_1,\ldots,A_{n+1},$ at a given
locus. Suppose that the individuals in the population are monoecious,
that there are no selective differences between these alleles and  no
mutations. There are $2N$ alleles in the population in any generation,
so it is sufficient to focus on the number $Y_m=(Y_m^1,\ldots,Y_m^n)$
of alleles $A_1,\ldots,A_n$ at generation time $m$. Assume that
$Y_0=i_0=(i_0^1,\ldots,i_0^n)$ and according to the Wright-Fisher
model, the alleles in generation $m+1$ are derived by sampling with
replacement from the alleles of generation $m$. Thus,  the transition probability is
$$
\PP(Y_{m+1}=j|Y_m=i)=\frac{(2N)!}{(j^0)! (j^1)! \ldots (j^n)!} \prod_{k=0}^n \ple \frac{i^k}{2N}\pri^{j^k},
$$
where
$$
i,j \in S_n^{(2N)}=\Bigg\{i=(i^1,\ldots,i^n): i^k\in \{0,1,\ldots, 2N\}, \sum_{k=1}^n i^k \le 2N\Bigg\}
$$
and
$$
i^0=2N-|i|=2N-i^1-\ldots-i^n;\quad\quad j^0=2N-|j|=2N-j^1-\ldots-j^n.
$$
After rescaling
$$
t=\frac{m}{2N}, \: \: X_t=\frac{Y_t}{2N},
$$
we have a discrete Markov chain $X_t$ valued in $\ble 0,
\frac{1}{2N},\ldots,1\bri^n$ with $t=1$ now corresponding to $2N$
generations. It is easy to see that
\bel{cond:variable}
\begin{split}
X_0=&p=\frac{i_0}{2N},\\
\E(\delta X^i_t)=&0,\\
\E(\delta X^i_t. \delta X^j_t)=&(X^i_t)(\de_{ij}-X^j_t),\\
\E(\delta X_t)^\al=&(\delta t) \text{ for }|\al|\ge 3.
\end{split}
\qe
We now denote by $m_\al(t)$ the $\al^{th}-$moment of the distribution
about zero at the $t^{th}$ generation, i.e.,
$$
m_\al(t)=\E(X_t)^{\al}
$$
Then
$$
m_\al(t+1)=\E(X_t+\delta X_t)^{\al}
$$
Expanding the right hand side and noting (\ref{cond:variable}) we
obtain the following recursion formula, under the assumption that the
population number $N$ is sufficiently large to neglect terms of order
$\frac{1}{N^2}$ and higher,
\be
m_\al(t+1)=\ble 1-\frac{|\al|(|\al|-1)}{2}\bri m_\al(t)+\suml_{i=1}^n \frac{\al_i (\al_i-1)}{2}m_{\al-e_i}(t)
\qe
Under this assumption, the moments change very slowly per generation
and we can replace this system of difference equations by a system of differential equations:
\bel{cond:moment}
\dot{m}_\al(t)=-\frac{|\al|(|\al|-1)}{2} m_\al(t)+\suml_{i=1}^n \frac{\al_i (\al_i-1)}{2}m_{\al-e_i}(t).
\qe

With the aim to find a continuous process which is a good
approximation for the above discrete process, we should look for a
continuous Markov process $\ble X_t\bri_{t\ge 0}$ valued in $[0,1]^n$
with the same conditions as (\ref{cond:variable}) and
(\ref{cond:moment}). Specially, if we call $u(x,t)$ the probability
density function of this continuous process, the condition
(\ref{cond:variable}) implies (see for example \cite{ewens}, p. 137)
that $u$ is a solution of the Fokker-Planck (Kolmogorov forward) equation
\be
\begin{cases}
u_t&=L_nu \text{ in }V_n\times (0,\infty),\\
u(x,0)&=\de_p(x) \text{ in }V_n;
\end{cases}
\qe
and the condition (\ref{cond:moment}) implies
$$
[u_t,x^\al]_n=\left[u,-\frac{|\al|(|\al|-1)}{2} x^\al+\suml_{i=1}^n \frac{\al_i (\al_i-1)}{2} x^{\al-e_i}\right]_n=[u,L^*(x^\al)]_n, \forall \al,
$$
i.e.,
\be
[u_t,\phi]_n=[u,L_n^*\phi]_n, \forall \phi \in H_n.
\qe

This leads us to the following definition of solutions.
\begin{defn}
We call $u\in H$ a solution of the Fokker-Planck equation associated with the Wright-Fisher model if
\begin{align}
u_t&=L_n u \text{ in }V_n\times (0,\infty),\label{eq:1}\\
u(x,0)&=\de_p(x) \text{ in } V_n;\label{eq:2}\\
[u_t,\phi]_n&=[u,L_n^* \phi]_n,\: \forall \phi\in H_n.\label{eq:3}
\end{align}
\end{defn}

\subsection{The global solution}
In this subsection, we shall construct the solution and prove the
existence as well as the uniqueness of the solution. The process of
finding the solution is as follows: We firstly find the general
solution of the Fokker-Planck equation (\ref{eq:1}) by the separation
of variables method. Then we construct a solution depending on certain
parameters. We then use the conditions of (\ref{eq:2}, \ref{eq:3}) to determine the parameters. Finally, we check the  solution.

{\it Step 1:}
Consider on $V_n$, assume that $u_n(\x,t)=X(\x)T(t)$ is a solution of the Fokker-Planck equation $(\ref{eq:1})$. Then we have
$$
\frac{T_t}{T}=\frac{L_n X}{X}=-\ld
$$
Clearly $\ld$ is a constant which is independent on $T, X$. From the Proposition~(\ref{prop:4}) we obtain the local solution of the equation (\ref{eq:1}) of the form
\begin{equation*}
u_n(\x,t)=\suml_{m=0}^\infty \suml_{|\al|=m}c^{(n)}_{m,\al} X^{(n)}_{m,\al}(\x) e^{-\ld^{(n)}_m t},
\end{equation*}
where
$$
\ld_m^{(n)}=\frac{(n+m)(n+m+1)}{2}
$$
is the eigenvalue of $L_n$ and
$$
X^{(n)}_{m,\al}(\x),\quad |\al|=m
$$
are the corresponding eigenvectors of $L_n$.

For $m\ge 0, |\beta|=m$, we conclude from Proposition~(\ref{prop:2}) that
$$
L_n^*\Big(w_n X^{(n)}_{m,\beta}\Big)=-\ld_m^{(n)}w_n X^{(n)}_{m,\beta}.
$$
It follows that
\begin{align*}
[u_t, w_n X^{(n)}_{m,\beta}]_n &= \Big[u,L_n^*\Big(w_n X^{(n)}_{m,\beta}\Big)\Big]_n\quad \text{(the moment condition)}\\
&=-\ld_m^{(n)}\Big[u,w_n X^{(n)}_{m,\beta}\Big]_n.
\end{align*}
Therefore
\begin{align*}
[u, w_n X^{(n)}_{m,\beta}]_n &= [u(\cdot,0), w_n X^{(n)}_{m,\beta}]_n e^{ -\ld_m^{(n)} t}\\
&= w_n(\p) X^{(n)}_{m,\beta}(\p) e^{ -\ld_m^{(n)} t}.
\end{align*}
Thus,
\begin{align*}
w_n(\p) X^{(n)}_{m,\beta}(\p) e^{ -\ld_m^{(n)} t} &= [u, w_n X^{(n)}_{m,\beta}]_n\\
&=(u_n, w_n X^{(n)}_{m,\beta})_n\quad \text{(because $w_n$ vanishes on boundary)}\\
&=\sum_{|\al|=m} c^{(n)}_{m,\al} (X^{(n)}_{m,\al},w_n X^{(n)}_{m,\beta})_n e^{-\ld^{(n)}_m t}.
\end{align*}
It follows that
$$
\Big(c^{(n)}_{m,\al}\Big)_{\al}= \Bigg[\Bigg((X^{(n)}_{m,\al},w_n X^{(n)}_{m,\beta})_n\Bigg)_{\al,\beta}\Bigg]^{-1}\Bigg(w_n(\p) X^{(n)}_{m,\beta}(\p)\Bigg)_\beta.
$$

{\it Step 2:}
The solution $u\in H$ satisfying (\ref{eq:1}) will be found in the following form
\be
\begin{split}
u(\x,t)=\suml_{k=1}^n u_k(\x,t) \chi_{V_k}(x) + \suml_{i=0}^n u^i_0(\x,t) \de_{e^i}(\x).
\end{split}
\qe

We use the condition (\ref{eq:3}) to obtain gradually values of $u_k, \: k=n-1,\ldots,0$. In fact, assume that we want to calculate $u^{(0,\ldots,n-1)}_{n-1}(x^1,\cdots,x^{n-1},0,t)$.

We note that, if we choose
$$
\phi(\x)=x^1\cdots x^n X^{(n-1)}_{k,\beta}(x^1,\ldots,x^{n-1}), \quad |\beta|=k.
$$
then $\phi(\x)$ vanishes on faces of dimension at most $n-1$ except the face $V_{n-1}^{0,\ldots,n-1}$. Therefore, the expectation of $\phi$ will be
\begin{align*}
[u,\phi]_n=(u_n,\phi)_n+(u^{(0,\ldots,n-1)}_{n-1}, \phi)_{n-1}.
\end{align*}

The left hand side can be calculated easily by the condition (\ref{eq:3})
\bel{eq:balance}
[u_t,\phi]_n= [u,L_n^*(\phi)]_n=-\ld^{(n-1)}_k [u,\phi]_n.
\qe
It follows that
$$
[u,\phi]_n = \phi(\p) e^{-\ld^{(n-1)}_k t}.
$$

The first part of the right hand side is known as
$$
(u_n,\phi)_n= \sum_{m,\al} c^{(n)}_{m,\al}\Bigg( \int_{V_n} X^{(n)}_{m,\al}(\x)\phi(\x)d\x\Bigg) e^{-\ld^{(n)}_m t}.
$$

Therefore we can expand $u^{(0,\ldots,n-1)}_{n-1}(x^1,\cdots,x^{n-1},0,t)$ as follows
\begin{align*}
u^{(0,\ldots,n-1)}_{n-1}(x^1,\cdots,x^{n-1},0,t)&= \sum_{m\ge 0} c^{(n-1)}_m(\x) e^{-\ld^{(n-1)}_m t}\\
&=\sum_{m\ge 0}\sum_{l\ge 0}\sum_{|\al|=l} c^{(n-1)}_{m,l,\al}X^{(n-1)}_{l,\al}(x^1,\ldots,x^{n-1}) e^{-\ld^{(n-1)}_m t}.
\end{align*}

Put this formula into Equation~(\ref{eq:balance}) we will obtain all the coefficients $c^{(n-1)}_{m,l,\al}$. It means that we will obtain $u^{(0,\ldots,n-1)}_{n-1}(x^1,\cdots,x^{n-1},0,t)$.
Similarly we will obtain $u_{n-1}$. And finally we will obtain all $u_k,\quad k=n-1,\ldots,0$. It means we obtain the global solution in form
\be
\begin{split}
u(\x,t)=&\suml_{k=1}^n u_k \chi_{V_k}(\x) +\suml_{i=0}^n u_0^i(\x,t) \delta_{e_i}(\x).\\
=&\suml_{k=1}^n\suml_{m \ge 0}\sum_{l\ge 0} \suml_{|\al|=l}c^{(k)}_{m,l,\al}  X^{(k)}_{l,\al}(\x) e^{-\ld^{(k)}_m t} \chi_{V_k}(\x)  +\suml_{i=0}^n u_0^i(\x,t) \delta_{e_i}(\x).
\end{split}
\qe
It is not difficult to show that $u$ is a solution of the Fokker-Planck equation associated with WF model.

{\it Step 3:}
We can easily see that this solution is unique. In fact, assume that $u_1,u_2$ are two solutions of the Fokker- Planck equation associated with WF model. Then $u=u_1-u_2$ will satisfy

\begin{align*}
u_t&=L_n u \text{ in } V_n \times (0,\infty),\\
u(x,0)&=0 \text{ in } \overline{V}_n;\\
[u_t,\phi]_n&=[u,L^* \phi]_n,\: \forall \phi\in H_n.
\end{align*}
It follows that
\begin{align*}
[u_t,1]_n&=[u,L_n^*(1)]_n=0,\\
[u_t,x^i]_n&=[u,L_n^*(x^i)]_n=0,\\
[u_t,w_k^{(i_0,\ldots, i_k)} X^{(k)}_{j,\al}\chi_{V_k^{(i_0,\ldots, i_k)}}]_n
&=[u,L_n^*(w_k^{(i_0,\ldots, i_k)} X^{(k)}_{j,\al}\chi_{V_k^{(i_0,\ldots, i_k)}})]_n\\
&=[u,L_k^*(w_k^{(i_0,\ldots, i_k)} X^{(k)}_{j,\al}\chi_{V_k^{(i_0,\ldots, i_k)}})]_n\\
&=-\ld^{(k)}_{j} [u,w_k^{(i_0,\ldots, i_k)} X^{(k)}_{j,\al}\chi_{V_k^{(i_0,\ldots, i_k)}}]_n.
\end{align*}
Therefore
\begin{align*}
[u,1]_n&=[u(\cdot, 0),1]_n=0,\\
[u,x^i]_n&=[u(\cdot,0),x^i]_n=0,\\
[u,w_k^{(i_0,\ldots, i_k)} X^{(k)}_{j,\al}\chi_{V_k^{(i_0,\ldots, i_k)}}]_n&=[u(\cdot,0),w_k^{(i_0,\ldots, i_k)} X^{(k)}_{j,\al}\chi_{V_k^{(i_0,\ldots, i_k)}}]_ne^{-\ld^{(k)}_j t}=0.
\end{align*}
Since $\ble 1,\ble x^i\bri_i,\{w_k^{(i_0,\ldots, i_k)} X^{(k)}_{j,\al}\chi_{{V_k}^{(i_0,\ldots, i_k)}}\}_{1\le k\le n,(i_0,\ldots,i_k)\in I_k,j\ge 0,|\al|=j} \bri$ is also a basis of $H_n$ it follows that $u=0 \in H$.

In conclusion, we have established
\begin{thm}\label{thm1}
The Fokker Planck equation associated with the Wright-Fisher model with $(n+1)-$alleles possesses the unique solution
\be
\begin{split}
u(\x,t)=&\suml_{k=1}^n u_k \chi_{V_k}(\x) +\suml_{i=0}^n u_0^i(\x,t) \delta_{e_i}(\x).\\
=&\suml_{k=1}^n\suml_{m \ge 0}\sum_{l\ge 0} \suml_{|\al|=l}c^{(k)}_{m,l,\al}  X^{(k)}_{l,\al}(\x) e^{-\ld^{(k)}_m t} \chi_{V_k}(\x)  +\suml_{i=0}^n u_0^i(\x,t) \delta_{e_i}(\x).
\end{split}
\qe
\end{thm}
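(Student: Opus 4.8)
The plan is to follow the three-step strategy sketched above: build a candidate solution on the stratification $\overline{V}_n=\bigsqcup_k V_k$ by combining separation of variables on each stratum with the moment identity \eqref{eq:3}, verify that the candidate is indeed a solution, and then prove uniqueness by testing against a basis of $H_n$.

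First I would treat the open top stratum $V_n$. Writing $u_n(\x,t)=X(\x)T(t)$ in \eqref{eq:1} forces $T_t/T=L_nX/X=-\ld$ for a separation constant $\ld$, and Proposition \ref{prop:4} identifies the admissible $\ld$ as the $\ld^{(n)}_m=\frac{(n+m)(n+m+1)}{2}$ with eigenspace spanned by $\{X^{(n)}_{m,\al}\}_{|\al|=m}$; hence $u_n(\x,t)=\suml_{m\ge0}\suml_{|\al|=m}c^{(n)}_{m,\al}X^{(n)}_{m,\al}(\x)e^{-\ld^{(n)}_mt}$. To fix the $c^{(n)}_{m,\al}$, test \eqref{eq:3} against $\phi=w_nX^{(n)}_{m,\beta}$: by Proposition \ref{prop:2} this $\phi$ is an eigenfunction of $L_n^*$ with eigenvalue $-\ld^{(n)}_m$, so $t\mapsto[u,w_nX^{(n)}_{m,\beta}]_n$ solves a scalar linear ODE and, by the initial condition \eqref{eq:2}, equals $w_n(\p)X^{(n)}_{m,\beta}(\p)e^{-\ld^{(n)}_mt}$. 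Since $w_n$ vanishes on the boundary, this pairing equals $(u_n,w_nX^{(n)}_{m,\beta})_n$, and matching against the expansion of $u_n$ and inverting the Gram matrix $\bigl((X^{(n)}_{m,\al},w_nX^{(n)}_{m,\beta})_n\bigr)_{\al,\beta}$ — nonsingular because the $X^{(n)}_{m,\al}$ form a $w_n$-orthogonal basis (Proposition \ref{prop:5}) — determines all $c^{(n)}_{m,\al}$.

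Second I would descend to the lower strata, determining $u_k$ for $k=n-1,\dots,0$ in turn. The key device is to test \eqref{eq:3} against functions of the form $\phi(\x)=x^1\cdots x^n\,X^{(k)}_{j,\beta}$ in the appropriate coordinates, which vanish on every face of dimension $\le n-1$ except one prescribed subsimplex; then $[u,\phi]_n$ receives contributions only from the (already computed) $u_n$ — and, at later stages, the already computed higher strata — plus the unknown restriction of $u$ to that subsimplex. Because $\phi$ is, up to the weight, an eigenfunction of the relevant $L_k^*$, the balance relation \eqref{eq:balance} becomes a linear ODE whose inhomogeneity is an explicit finite combination of exponentials $e^{-\ld^{(k')}_mt}$ with $k'\ge k$; solving it and expanding the unknown restriction in the basis $\{X^{(k)}_{l,\al}\}$ (Proposition \ref{prop:5}) produces all the coefficients $c^{(k)}_{m,l,\al}$ — the two sums over $m$ and $l$ arising precisely because the forcing from higher strata injects decay rates $\ld^{(k')}_m$ carried by spatial modes of various degrees $l$. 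Iterating down to $k=0$ also yields the point-mass coefficients $u_0^i$, and assembles $u$ into the stated form; one then checks directly that $u\in H$, that $u_n$ solves \eqref{eq:1} by construction, that $u(\cdot,0)=\de_p$, and that \eqref{eq:3} holds for every $\phi\in H_n$.

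For uniqueness, if $u_1,u_2$ are two solutions then $u:=u_1-u_2$ satisfies the homogeneous system with $u(\cdot,0)=0$. Testing \eqref{eq:3} against $1$, against each $x^i$, and against each $w_k^{(i_0,\ldots,i_k)}X^{(k)}_{j,\al}\chi_{V_k^{(i_0,\ldots,i_k)}}$, and using Proposition \ref{prop:2} together with the weighted-adjointness of $L_n$, one gets $\tfrac{d}{dt}[u,\phi]_n=-\ld[u,\phi]_n$ (or $=0$), hence $[u,\phi]_n=[u(\cdot,0),\phi]_ne^{-\ld t}=0$ for all such $\phi$; since this family is a basis of $H_n$, $u\equiv0$ in $H$. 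I expect the real obstacle to lie in the descent step and the final verification: checking that the chosen test functions genuinely decouple the strata, that the coefficients produced by different admissible test functions are mutually consistent, and that the resulting infinite series converges in $H$ and actually satisfies \eqref{eq:3} against all of $H_n$ and not merely the spanning family used to define it — the points the outline passes over as ``not difficult to show''.
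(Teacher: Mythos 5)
Your proposal reproduces the paper's argument essentially verbatim: the same separation of variables on $V_n$ with coefficients fixed by testing the moment condition against $w_nX^{(n)}_{m,\beta}$ and inverting the Gram matrix, the same descent through the strata via test functions of the form $x^1\cdots x^n X^{(n-1)}_{k,\beta}$ that isolate one subsimplex, and the same uniqueness argument by pairing the difference of two solutions against the basis $\{1,x^i,w_k^{(i_0,\ldots,i_k)}X^{(k)}_{j,\al}\chi_{V_k^{(i_0,\ldots,i_k)}}\}$ of $H_n$. The caveats you flag at the end (convergence of the series, consistency of the descent, verification against all of $H_n$) are precisely the points the paper also leaves to the reader, so your account is faithful to the proof as given.
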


\begin{example}
To illustrate this process, we consider the case of three alleles.

We will construct the global solution for the problem
$$
\begin{cases}
\frac{\partial u}{\partial t} &= L_2 u,\quad \text{ in } V_2\times (0,\infty),\\
u(\x,0)&=\de_{\p}(\x),\quad \x\in V_2,\\
[u_t,\phi]_2&=[u,L_2^* \phi]_2, \quad \text{ for all } \phi \in H_2,
\end{cases}
$$
where the global solution of the form
$$
u=u_2 \chi_{V_2}+ u_1^{0,1} \chi_{V_{1}^{0,1}}+u_1^{0,2} \chi_{V_{1}^{0,2}}+u_1^{0,0} \chi_{V_{1}^{0,0}}
+ u_0^1 \chi_{V_0^1}+u_0^2 \chi_{V_0^2}+u_0^0 \chi_{V_0^0}.
$$
and the product is
\begin{align*}
[u,\phi]_2=&\int_{V_2}u_2 \phi_{|V_2}d\x + \int_{0}^1 u_1^{0,1}(x^1,0,t)\phi(x^1,0)dx^1+ \int_{0}^1 u_1^{0,2}(0,x^2,t)\phi(0,x^2)dx^2\\
&+ \frac{1}{\sqrt 2}\int_{0}^1 u_1^{1,2}(x^1,1-x^1,t)\phi(x^1,1-x^1)dx^1\\
&+u_0^1(1,0,t)\phi(1,0)+u_0^2(0,1,t)\phi(0,1)+u_0^0(0,0,t)\phi(0,0).
\end{align*}
Step 1: We find out the local solution $u_2$ as follows
$$
u_2(\x,t)=\sum_{m\ge 0}\sum_{\al^1+\al^2=m} c^{(2)}_{m,\al^1,\al^2} X^{(2)}_{m,\al^1,\al^2}(\x)e^{-\ld_m^{(2)}t}.
$$
To define coefficients $c^{(2)}_{m,\al^1,\al^2}$ we use the initial condition and the orthogonality of eigenvectors $X^{(2)}_{m,\al^1,\al^2}$
\begin{align*}
w_2(\p)X^{(2)}_{m,\beta^1,\beta^2}(\p)
&=\big[u(0),w_2 X^{(2)}_{m,\beta^1,\beta^2}\big]_2\\
&=\big(u_2(0),w_2 X^{(2)}_{m,\beta^1,\beta^2}\big)_2\quad \text{ because $w_2$ vanishes on the boundary}\\
&=\sum_{\al^1+\al^2=m} c^{(2)}_{m,\al^1,\al^2} \Big(X^{(2)}_{m,\al^1,\al^2},w_2X^{(2)}_{m,\beta^1,\beta^2}\Big)\quad \text{for all } \beta^1+\beta^2=m.
\end{align*}
Because the matrix
$$
\Big(X^{(2)}_{m,\al^1,\al^2},w_2X^{(2)}_{m,\beta^1,\beta^2}\Big)_{(\al^1,\al^2), (\beta^1,\beta^2)}
$$
is positive definite then we have unique values of $c^{(2)}_{m,\al^1,\al^2}$. It follows that we have a unique local solution $u_2$.

Step 2: We will use the moment condition to define all other coefficients of the global solution.

Firstly, we define the coefficients of $u_1^{1,2}$ as follows
\begin{align}
u_1^{1,2}(x^1,1-x^1,t)&=\sum_{m\ge 0}c_{m}(x^1) e^{-\ld^{(1)}_m t}\\
&=\sum_{m,l\ge 0}c_{m,l} X^{(1)}_l(x^1) e^{-\ld^{(1)}_m t}.
\end{align}
We note that
$$
L_2^* \Big(x^1 x^2 X_k^{(1)}(x^1)\Big)=-\ld^{(1)}_k  x^1 x^2 X_k^{(1)}(x^1).
$$
Therefore
$$
\big[u_t, x^1 x^2 X_k^{(1)}(x^1)\big]_2=\Big[u, L^*_2 \Big(x^1 x^2 X_k^{(1)}(x^1)\Big)\Big]_2=-\ld^{(1)}_k  [u,x^1 x^2 X_k^{(1)}(x^1)]_2.
$$
It follows that
$$
\big[u,x^1 x^2 X_k^{(1)}(x^1)\big]_2=p^1 p^2 X_k^{(1)}(p^1) e^{-\ld_k^{(1)}t}.
$$
Thus we have
\begin{align*}
p^1 p^2 X_k^{(1)}(p^1) e^{-\ld_k^{(1)}t}&=\big[u,x^1 x^2 X_k^{(1)}(x^1)\big]_2\\
&=\Big(u_2,x^1 x^2 X_k^{(1)}(x^1)\Big)_2 + \Big(u_1^{1,2},x^1 (1-x^1) X_k^{(1)}(x^1) \Big)_1\\
&\quad \text{because $x^1 x^2$ vanish on the other boundaries}\\
&=\sum_{m\ge 0}\Bigg(\sum_{|\al|=m} c^{(2)}_{m,\al}\Bigg( \int_{V_2} x^1 x^2 X_{m,\al}^{(2)}(x^1,x^2) X_k^{(1)}(x^1)d\x \Bigg)\Bigg)e^{-\ld^{(2)}_{m}t}\\
&\quad +\sum_{m\ge 0} c_{m,k} \Big(X_k^{(1)},w_1 X_k^{(1)}\Big) e^{-\ld^{(1)}_m t}\\
&\quad \text{because of the orthogonality of $(\cdot,\cdot)_1$ with respect to $w_1$}\\
&=\sum_{m\ge 0}r_m e^{-\ld^{(2)}_{m}t}+\sum_{m\ge 0} c_{m,k}d_k e^{-\ld^{(1)}_m t}\\
\end{align*}

By equating of coefficients of $e^{\al t}$ we obtain $u_1^{1,2}$. Similarly we obtain $u_1$. Then, we define the coefficients of $u_0^1$ from the $1-$th moment.

Note that when $\phi = x^i$, $L_2^*(\phi)=0$, therefore $[u_t,\phi]_2=0$ or
$$
[u,x^i]_2=[u(0),x^i]=p^i.
$$
It follows that
\begin{align*}
p^1=[u,x^1]= (u_2,x^1)_2 + (u_1^{0,1},x^1)_1+(u_1^{1,2},x^1)_1+ u_0^1(1,0,t).
\end{align*}
Thus we obtain $u_0^1(1,0,t)$. Similarly we have all $u_0$. Therefore we obtain the global solution $u$.

It is easy to check that $u$ is a global solution. To prove the uniqueness we proceed as follows
Assume that $u$ is the difference of any two global solutions, i.e. $u$ satisfies
$$
\begin{cases}
u_t&=L_2u, \quad \text{ in } V_2\times (0,\infty),\\
u(\x,0)&=0,\quad \text{ in } V_2\\
[u_t,\phi]_2&=[u,L_2^* \phi]_2, \quad \text{ for all } \phi \in H_2.
\end{cases}
$$
We will prove that
\bel{eq:zerosolution}
[u,\phi]_2=0\quad \forall \phi\in H_2.
\qe
In fact,
\begin{align*}
[u_t,1]_2&=[u,L_2^*(1)]_2=0 \Rightarrow [u,1]_2=[u(0),1]_2=0,\\
[u_t,x^i]_2&=[u,L_2^*(x^i)]_2=0 \Rightarrow [u,x^i]_2=[u(0),x^i]_2=0,\\
[u_t,w_1(x^i)X^{(1)}_m(x^i)]_2&=[u,L_2^*(w_1(x^i)X^{(1)}_m(x^i))]_2=-\ld_m^{(1)} [u,w_1(x^i)X^{(1)}_m(x^i)]_2\\ &\Rightarrow [u,w_1(x^i)X^{(1)}_m(x^i)]_2=[u(0),w_1(x^i)X^{(1)}_m(x^i)]_2 e^{-\ld_m^{(1)} t}=0,\\
[u_t,w_2(x^1,x^2)X^{(2)}_{m,\al}(x^1,x^2)]_2&=[u,L_2^*(w_2(x^1,x^2)X^{(2)}_{m,\al}(x^1,x^2))]_2=-\ld_m^{(2)} [u,w_2(x^1,x^2)X^{(2)}_{m,\al}(x^1,x^2)]_2\\
&\Rightarrow [u,w_2(x^1,x^2)X^{(2)}_{m,\al}(x^1,x^2)]_2=[u(0),w_2(x^1,x^2)X^{(2)}_{m,\al}(x^1,x^2)]_2 e^{-\ld_m^{(2)} t}=0.
\end{align*}
We need only to prove that Eq.~(\ref{eq:zerosolution}) holds for all
$$
\phi(x^1,x^2)=(x^1)^m(x^2)^n,\quad \forall m,n\ge 0.
$$
\begin{enumerate}
\item If $n=0, m\ge 0$, we see that $\phi$ can be generated from $\{1,x^1,w_1(x^1)X^{(1)}_m(x^1)\}$, therefore $[u,\phi]_2=0$
\item If $m=0, n\ge 0$, we see that $\phi$ can be generated from $\{1,x^2,w_1(x^2)X^{(1)}_m(x^2)\}$, therefore $[u,\phi]_2=0$
\item If $n=1, m\ge 1$, we expand $(x^1)^{m-1}$ by
$$
(x^1)^{m-1}=\sum_{k\ge 0} c_k X_k^{(1)}(x^1).
$$
Note that
$$
L_2^*\Big(x^1 x^2 X_k^{(1)}(x^1)\Big)=-\ld_k^{(1)}x^1 x^2 X_k^{(1)}(x^1)
$$
Therefore
$$
[u_t, x^1 x^2 X_k^{(1)}(x^1)]_2=[u, L_2^*\Big(x^1 x^2 X_k^{(1)}(x^1)\Big)]_2=-\ld_k^{(1)} [u,x^1 x^2 X_k^{(1)}(x^1)]_2.
$$
It follows that
$$
[u,x^1 x^2 X_k^{(1)}(x^1)]_2=[u(0),x^1 x^2 X_k^{(1)}(x^1)]_2 e^{-\ld^{(1)}_k}=0.
$$
Therefore
$$
[u,\phi]_2=\sum_{k\ge 0} c_k [u,x^1 x^2 X_k^{(1)}(x^1)]_2=0.
$$
\item If $n\ge 2, m\ge 1$ we use the inductive method in $n$. We have
\begin{align*}
(x^1)^m(x^2)^n &= x^1x^2(x^1+x^2-1)(x^1)^{m-1}(x^2)^{n-2}+(x^1)^m(1-x^1)(x^2)^{n-1}\\
&=-w_2(x^1,x^2)(x^1)^{m-1}(x^2)^{n-2}+(x^1)^m(1-x^1)(x^2)^{n-1}.
\end{align*}
In the assumption of induction, we have
$$
[u, (x^1)^m(1-x^1)(x^2)^{n-1}]_2=0
$$
Then, we expand $(x^1)^{m-1}(x^2)^{n-2}$ by
$$
(x^1)^{m-1}(x^2)^{n-2}=\sum_{m,\al}c^{(2)}_{m,\al}X^{(2)}_{m,\al}(x^1,x^2).
$$
Therefore
$$
[u,w_2(x^1,x^2)(x^1)^{m-1}(x^2)^{n-2}]_2=\sum_{m,\al}c^{(2)}_{m,\al}[u,w_2(x^1,x^2)X^{(2)}_{m,\al}(x^1,x^2)]_2=0.
$$
It follows that $[u,(x^1)^m(x^2)^n]_2=0$.

Thus, $u=0$.
\end{enumerate}
\end{example}

\section{Applications}
In this section, we present some applications of our global solution
to the evolution of the process $(X_t)_{t\ge 0}$ such as the
expectation and the second moment of the absorption time, the probability distribution of the absorption time for having $k+1$ alleles, the probability of having  exactly $k+1$ alleles, the $\al^{th}$ moments, the probability of heterogeneity, and the rate of loss of one allele in a population having $k+1$ alleles. We refer to read \cite{ewens}, \cite{kimura1}, \cite{kimura2}, \cite{kimura3}, \cite{littler1}, \cite {littler2}, etc., which of these results are already known.

\subsection{The absorption time for having $(k+1)$ alleles}
We denote by $T^{k+1}_{n+1}(p)=\inf \ble{t>0: X_t\in
  \overline{V}_k}|X_0=p \bri$ the first time when the  population has
(at most) $k+1$ alleles. $T^{k+1}_{n+1}(p)$ is a continuous random
variable valued in $[0,\infty)$ and we denote by $\phi(t,p)$ its
probability density function. It is easy to see that $\overline{V}_k$
is invariant under the process $(X_t)_{t \ge 0}$, i.e. if $X_s \in
\overline{V}_k$ then $X_t\in \overline{V}_k$ for all $t\ge s$ (once an
allele is lost from the population, it can never again be recovered). We have the equality
$$
\PP(T^{k+1}_{n+1}(p)\le t)=\PP(X_{t}\in \overline{V}_k|X_0=p)=\int_{\overline{V}_k}u(x,p,t)d\mu(x).
$$
It follows that
$$
\phi(t,p)=\int_{\overline{V}_k}\frac{\partial}{\partial t} u(x,p,t)d\mu(x)
$$
Therefore the expectation for the absorption time of having $k+1$ alleles is (see also \cite{ewens}, p. 194)
\begin{equation*}
\begin{split}
\E(T^{k+1}_{n+1}(p))=&\int_0^\infty t\phi(t,p)dt\\
=&\int_{\overline{V}_k}\int_0^\infty t\frac{\partial}{\partial t} u(x,p,t)dtd\mu(x)\\
=&\suml_{j=1}^k \suml_{(i_0,\ldots,i_j)\in I_j} \suml_{m\ge 0} \suml_{|\alpha|=m} c_{m,\alpha}^{(j)} \int_{{V}^{(i_0,\ldots,i_j)}_j} X^{(j)}_{m,\alpha}(x)\ple \int_0^\infty t\frac{\partial}{\partial t} e^{-\ld_m^{(j)}t}dt\pri d\mu^{(i_0,\ldots,i_j)}_j(x)\\
&+\suml_{i=0}^n \suml_{k=1}^n\suml_{m \ge 0} \suml_{|\al|=m}c^{(k)}_{m,\al}a^{(k)}_{m,\al,i}\ple \int_0^\infty t\frac{\partial}{\partial t} e^{-\ld^{(k)}_m t}dt\pri,\\
=&\suml_{j=1}^k \suml_{(i_0,\ldots,i_j)\in I_j} \suml_{m\ge 0} \suml_{|\alpha|=m} c_{m,\alpha}^{(j)} \int_{{V}^{(i_0,\ldots,i_j)}_j} X^{(j)}_{m,\alpha}(x)\ple -\frac{1}{\ld_m^{(j)}}\pri d\mu^{(i_0,\ldots,i_j)}_j(x)\\
&+\suml_{i=0}^n \suml_{k=1}^n\suml_{m \ge 0} \suml_{|\al|=m}c^{(k)}_{m,\al}a^{(k)}_{m,\al,i}\ple -\frac{1}{\ld_m^{(k)}}\pri.
\end{split}
\end{equation*}

and the second moment of this absorption time  is (see also \cite{littler2})
\begin{equation*}
\begin{split}
\E(T^{k+1}_{n+1}(p))^2=&\int_0^\infty t^2\phi(t,p)dt\\
=&\int_{\overline{V}_k}\int_0^\infty t^2\frac{\partial}{\partial t} u(x,p,t)dtd\mu(x)\\
=&\suml_{j=1}^k \suml_{(i_0,\ldots,i_j)\in I_j} \suml_{m\ge 0} \suml_{|\alpha|=m} c_{m,\alpha}^{(j)} \int_{{V}^{(i_0,\ldots,i_j)}_j} X^{(j)}_{m,\alpha}(x)\ple \int_0^\infty t^2\frac{\partial}{\partial t} e^{-\ld_m^{(j)}t}dt\pri d\mu^{(i_0,\ldots,i_j)}_j(x)\\
&+\suml_{i=0}^n \suml_{k=1}^n\suml_{m \ge 0} \suml_{|\al|=m}c^{(k)}_{m,\al}a^{(k)}_{m,\al,i}\ple \int_0^\infty t^2\frac{\partial}{\partial t} e^{-\ld^{(k)}_m t}dt\pri,\\
=&\suml_{j=1}^k \suml_{(i_0,\ldots,i_j)\in I_j} \suml_{m\ge 0} \suml_{|\alpha|=m} c_{m,\alpha}^{(j)} \int_{{V}^{(i_0,\ldots,i_j)}_j} X^{(j)}_{m,\alpha}(x)\ple -\frac{2}{(\ld_m^{(j)})^2}\pri d\mu^{(i_0,\ldots,i_j)}_j(x)\\
&+\suml_{i=0}^n \suml_{k=1}^n\suml_{m \ge 0} \suml_{|\al|=m}c^{(k)}_{m,\al}a^{(k)}_{m,\al,i}\ple -\frac{2}{(\ld_m^{(k)})^2}\pri.
\end{split}
\end{equation*}

\subsection{The probability distribution of the absorption time for having
  $k+1$ alleles}
We note that $X_{T^{k+1}_{n+1}(p)}$ is a random variable valued in
$\overline{V_k}$. We consider the probability that this random
variable takes its value in $V_k^{(i_0,\ldots,i_k)}$, i.e., the
probability of the population at the first time having at most
$k+1$ alleles to consist precisely  of the $k+1$ alleles $\{A_{i_0},\ldots,A_{i_k}\}$.
Let $g_k$ be a function of $k$ variables defined inductively by
\begin{equation*}
\begin{split}
g_1(p^1)=&p^1;\\
g_2(p^1,p^2)=&\frac{p^1}{1-p^2}g_1(p^2)+\frac{p^2}{1-p^1}g_1(p^1);\\
g_{k+1}(p^1,\ldots,p^{k+1})=&\suml_{i=1}^{k+1}\frac{p^i}{1-\suml_{j\ne i}p^j}g_k(p^1,\ldots,p^{i-1},p^{i+1},\ldots,p^{k+1})
\end{split}
\end{equation*}
Then we shall have
\begin{thm}
$$
\PP \left(X_{T^{k+1}_{n+1}(p)} \in \overline{V^{(i_0,\ldots,i_k)}_k}\right)=g_{k+1}(p^{i_0},\ldots,p^{i_k}).
$$
\end{thm}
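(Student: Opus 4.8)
The plan is to identify the probability in question with a harmonic function for the Wright--Fisher generator and then to recognise $g_{k+1}$ as that function. Write $S=\ble i_0,\dots,i_k\bri$ and $h_S(p):=\PP_p\!\left(X_{T^{k+1}_{n+1}}\in\overline{V^{(i_0,\dots,i_k)}_k}\right)$. Since the paths are continuous and two alleles are a.s.\ not lost at the same instant, at the time $T:=T^{k+1}_{n+1}$ the process lies a.s.\ in $V_k=\bigsqcup_{|S'|=k+1}V_k^{(S')}$; the events $\ble X_T\in V_k^{(S')}\bri$ therefore partition and $h_S(p)=\PP_p(X_T\in V_k^{(S)})$. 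The candidate is $G_S(x):=g_{k+1}\!\left((x^i)_{i\in S}\right)$, a bounded function on $\overline V_n$ depending only on the coordinates indexed by $S$; the claim is that $G_S\equiv h_S$, which I would obtain by applying optional stopping to the process $t\mapsto G_S(X_{t\wedge T})$.

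Three facts about $g_{k+1}$ drive this. First, \emph{vanishing}: $g_{k+1}$ is identically $0$ as soon as one of its arguments is $0$ --- an immediate induction on $k$ from the defining recursion; hence $G_S\equiv0$ on every face $V_j^{(S')}$ with $S\not\subseteq S'$, in particular on all $V_k^{(S')}$ with $S'\neq S$. Second, \emph{normalisation}: on $V_k^{(S)}$ one has $\suml_{i\in S}x^i=1$, so the recursion collapses to $g_{k+1}(y)=\suml_{i}g_k\!\left((y_l)_{l\neq i}\right)$, and a repeated use of the identity $\suml_{j\in B}y_j/\suml_{l\in B}y_l=1$ telescopes this: $\suml_{|A|=1}g_k(y\setminus A)=\suml_{|A|=2}g_{k-1}(y\setminus A)=\dots=\suml_{|A|=k}g_1(y\setminus A)=\suml_{i}y_i=1$, so $G_S\equiv1$ on $V_k^{(S)}$. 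Third, \emph{harmonicity}: $L_j^*G_S=0$ on every stratum $V_j^{(S')}$ with $j>k$. For $S\not\subseteq S'$ this is trivial by the first fact; for $S\subseteq S'$, choosing a reference vertex of $V_j^{(S')}$ outside $S$ (possible since $|S'|>|S|$) and invoking the consistency of the Wright--Fisher operators under lumping of types (a short direct check on $a_{ij}=x^i(\delta_{ij}-x^j)$), $L_j^{*(S')}G_S$ becomes $L^*g_{k+1}$ in the coordinates $(x^i)_{i\in S}$, where $L^*=\tfrac12\suml_{i,j\in S}x^i(\delta_{ij}-x^j)\partial_i\partial_j$ is the $(k+2)$-allele generator with the extra type carrying $q:=1-\suml_{i\in S}x^i$. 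Everything thus reduces to the single claim $L^*g_{k+1}=0$, proved by induction on $|S|$: writing the recursion as $g_{k+1}=\suml_{i\in S}Z^i\,g_k\!\left((x^l)_{l\in S\setminus\{i\}}\right)$ with $Z^i:=x^i/(x^i+q)$, one has that $Z^i$ is $L^*$-harmonic (it is the bounded martingale $x^i/(x^i+x^*)$ of the lumped pair $\ble i,*\bri$), that $g_k\!\left((x^l)_{l\in S\setminus\{i\}}\right)$ is $L^*$-harmonic by the inductive hypothesis plus lumping (it does not involve $x^i$), and --- the one genuine computation --- that the carr\'e du champ vanishes: $\suml_{k,l\in S}a_{kl}(\partial_kZ^i)(\partial_lF)=0$ for \emph{every} $F$ independent of $x^i$. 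Since $L^*(Z^iF)=Z^iL^*F+FL^*Z^i+\suml_{k,l}a_{kl}\,\partial_kZ^i\,\partial_lF$, each summand $Z^ig_k\!\left((x^l)_{l\in S\setminus\{i\}}\right)$ is $L^*$-harmonic, hence so is their sum $g_{k+1}$.

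Granting these, the theorem follows quickly. $G_S$ is bounded, is continuous along the path of $X$ up to time $T$, and is smooth on each stratum $V_j$ ($j>k$) that the process visits before $T$ --- the denominators $x^i+q$ degenerate only where $q$ and some $x^i$ vanish simultaneously, i.e.\ on faces of dimension $<k$, which are not reached before $T$. Hence $G_S(X_{t\wedge T})$ is a bounded martingale; optional stopping together with bounded convergence gives $G_S(p)=\E_p[G_S(X_T)]$, and by the first two facts $G_S(X_T)=\mathbf{1}\ble X_T\in V_k^{(S)}\bri$ a.s., so $G_S(p)=\PP_p(X_T\in V_k^{(S)})=h_S(p)$.

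The main obstacle is not any one identity but the regularity bookkeeping of the last step: one has to check that $G_S$ is admissible for It\^o's formula on the stratified, boundary-degenerate domain. Note that $G_S$ is genuinely \emph{discontinuous} at certain faces of dimension $<k$ (exactly those on which $q$ and some $x^i$ vanish together), so the argument really depends on the facts that the diffusion does not visit such faces before $T$ and that --- by the same cancellation as in Proposition~\ref{prop:3} --- passage across a stratum contributes no boundary term. The carr\'e-du-champ computation in the harmonicity step, though elementary, is the other place requiring care.
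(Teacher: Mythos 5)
Your argument is essentially the paper's Method 2: the paper merely asserts that the probability is the unique solution of the Dirichlet problem $(L)^*v=0$ with boundary value $1$ on $V_k^{(i_0,\ldots,i_k)}$ and $0$ on the other open $k$-dimensional faces, whereas you actually carry this out --- verifying the harmonicity of $g_{k+1}$ via the decomposition $g_{k+1}=\suml_i Z^i g_k$ with $Z^i=x^i/(x^i+q)$ and the vanishing carr\'e du champ, checking the boundary values by the vanishing and telescoping identities, and replacing PDE uniqueness by optional stopping. The computations are correct, and the technical points you flag (alleles are a.s.\ lost one at a time; admissibility of $G_S$ for It\^o's formula on the stratified, degenerate boundary) are precisely the ones the paper also leaves unaddressed.
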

\begin{proof}
Method 1: By proving that
$$
\PP \left(X_{T^{k+1}_{n+1}(p)} \in \overline{V^{(i_0,\ldots,i_k)}_k}|X_{T^{k}_{n+1}(p)} \in \overline{V^{(i_1,\ldots,i_k)}_k}\right)=\frac{p^{i_0}}{1-p^{i_1}-\ldots-p^{i_k}}.
$$
and elementary combinatorial arguments, we have immediately the result (see also \cite{littler1})

Method 2: By proving that it is the unique solution of the classical Dirichlet problem
\begin{equation*}
\begin{cases}
(L_k^{(i_0,\ldots,i_k)})^* v(p)&=0 \text{ in } V_k\\
\lim\limits_{p\to q} v(p)&=1, q\in V_k^{(i_0,\ldots,i_k)},\\
\lim\limits_{p\to q} v(p)&=0, q\in \partial V_k \backslash V_k^{(i_0,\ldots,i_k)} \backslash V_{k-1}.
\end{cases}
\end{equation*}
\end{proof}

\subsection{The probability of having exactly $k+1$ alleles}
The probability of having only $1$ allele $A_i$ (allele $A_i$ is fix) is
\begin{equation*}
\begin{split}
\PP(X_t\in V^{(i)}_0|X_0=\p)&=\intl_{V^{(i)}_{0}}u_0^{(i)}(\x,t)d\mu^{(i)}_0(\x)\\
&=u^{(i)}_0(e_i,t)\\
&= p^i-\suml_{k=1}^n\suml_{m^{(k)} \ge 0} \sum_{l^{(k)}\ge 0} \suml_{|\al^{(k)}|=l^{(k)}}c^{(k)}_{m^{(k)},l^{(k)},\al^{(k)}} \Big(x^i, X^{(k)}_{l^{(k)},\al^{(k)}}\Big)_k e^{-\ld^{(k)}_{m^{(k)}} t}.
\end{split}
\end{equation*}
The probability of having exactly $(k+1)$ allele $\{A_0,\ldots,A_k\}$ (the coexistence probability of alleles $\{A_0,\ldots,A_k\}$) is (see also \cite{littler2, kimura2})
\begin{equation*}
\begin{split}
\PP(X_t\in V^{(i_0,\ldots,i_k)}_k|X_0=\p)&=\intl_{V^{(i_0,\ldots,i_k)}_k}u^{(i_0,\ldots,i_k)}_k(\x,t)d\mu^{(i_0,\ldots,i_k)}_k(\x)\\
&=\suml_{m\ge 0}\sum_{l\ge 0}\suml_{|\alpha|=l}c^{(k)}_{m,l,\al}\ple \intl_{V^{(i_0,\ldots,i_k)}_k} X^{(k)}_{m,\al}(\x)d\mu^{(i_0,\ldots,i_k)}_k(\x)\pri e^{-\ld_m^{(k)}t}.
\end{split}
\end{equation*}

\subsection{The $\al^{th}$ moments}
The $\al^{th}$-moments are (see also \cite{kimura1, kimura2, kimura3})
\begin{equation*}
\begin{split}
m_\alpha(t)=& [u,\x^\alpha]_n\\
=&\intl_{\overline{V_n}}x^\alpha u(\x,t)d\mu(\x)\\
=&\suml_{k=0}^n \suml_{(i_0,\ldots,i_k)\in I_k} \intl_{V^{(i_0,\ldots,i_k)}_k}\x^\alpha u_k^{(i_0,\ldots,i_k)}(\x,t)d\mu^{(i_0,\ldots,i_k)}_k(\x).
\end{split}
\end{equation*}

\subsection{The probability of heterogeneity}
The probability of heterogeneity is (see also \cite{kimura2})
\begin{equation*}
\begin{split}
H_t = & (n+1)! \: [u,w_n]_n\\
= & (n+1)! \: (u_n,w_n)_n \quad \text{ (because $w_n$ vanishes on the boundary)}\\
= & (n+1)! \: \Big(\sum_{m\ge 0} \sum_{|\alv|=m} c^{(n)}_{m,\alv} X^{(n)}_{m,\alv} e^{-\ld^{(n)}_{m,\alv}t}, w_n X^{(n)}_{0,\ov}\Big)_n\\
= & (n+1)! \: \Big(c^{(n)}_{0,\ov} X^{(n)}_{0,\ov}, w_n X^{(n)}_{0,\ov}\Big)_n \: e^{-\ld^{(n)}_{0,\ov}t} \quad \text{ (because of the orthogonality of the eigenvectors $X^{(n)}_{m,\alv}$)}\\
= & H_0\: e^{-\frac{(n+1)(n+2)}{2}t}
\end{split}
\end{equation*}

\subsection{The rate of loss of one allele in a population having $k+1$ alleles}
We have the solution of the form
$$
u=\sum_{k=0}^n u_{k}(\x,t) \chi_{V_k}(\x)
$$
The rate of loss of one allele in a population with (k+1) alleles equals the rate of decrease of
$$
u_{k}(\x,t)=\suml_{m \ge 0} \sum_{l\ge 0}\suml_{|\al|=l}c^{(k)}_{m,l,\al} X^{(k)}_{l,\al}(x)\chi_{V_k}(x) e^{-\ld^{(k)}_m t}.
$$
which is $\ld^{(k)}_0=\frac{k(k+1)}{2}$. This means the rate of loss of alleles in the population decreases (see also \cite{kimura2}).

\section*{Conclusion}
We have developed a new global solution concept for the Fokker-Planck
equation associated with the Wright-Fisher model, and we have proved
the existence and uniqueness of this solution (Theorem
\ref{thm1}). From this solution, we can easily read off the properties
of the considered process, like the  absorption time of having $k+1$
alleles, the probability of having exactly $k+1$ alleles, the $\al^{th}$ moments, the probability of heterogeneity, and the rate of loss of one allele in a population having $k+1$ alleles.

\newpage
\bibliographystyle{amsplain}

\end{document}